\newtheorem{theorem}{Theorem}[section]
\newtheorem{definition}[theorem]{Definition}
\newtheorem{cor}[theorem]{Corollary}
\numberwithin{equation}{section}
\theoremstyle{definition}
\newcommand{\norm}[1]{\left\lVert#1\right\rVert}
\begin{document}

\title{\vspace{-1.2cm} \bf Local Rigidity of the Bergman Metric and\\ of the K\"ahler Carath\'eodory Metric}

\author{Robert Xin Dong,\quad Ruoyi Wang,\quad and\quad Bun Wong}
\date{} 
\maketitle

\begin{abstract}
We prove that if the Carath\'eodory metric on a strictly pseudoconvex domain with a smooth boundary is
 locally K\"{a}hler near the boundary,  then the domain is biholomorphic to a ball. 
We also establish a local rigidity theorem for domains with Bergman metrics of constant holomorphic sectional curvature, and highlight this relationship with the Lu constant.

\end{abstract}

\renewcommand{\thefootnote}{\fnsymbol{footnote}}
\footnotetext{\hspace*{-7mm} 
\begin{tabular}{@{}r@{}p{16.5cm}@{}}
& Keywords. Carath\'eodory metric, Bergman metric, K\"{a}hler metric, K\"ahler-Einstein metric, biholomorphic map, holomorphic sectional curvature, Lu constant, strictly pseudoconvex domain, rigidity theorem\\
& Mathematics Subject Classification. Primary 32F45; Secondary 32T15, 32H02 \\

\end{tabular}}

\section{Introduction}

A bounded pseudoconvex domain in complex Euclidean space possesses several biholomorphically invariant metrics: the Bergman metric, the differential Carath\'eodory metric, the unique (up to scaling) K\"ahler-Einstein metric, and the differential Kobayashi-Royden metric. These metrics all coincide, up to a multiplicative constant, on the unit ball $\mathbb{B}^n$. However, this coincidence does not generally occur for arbitrary domains, leading to the following natural question:

     \medskip

\noindent{}{\bf Question.} What are the domains where the Bergman, Carath\'eodory, K\"ahler-Einstein, and Kobayashi metrics are not pairwise distinct (up to scaling)?

   \medskip

For a bounded domain $\Omega$ in $\mathbb{C}^n$, denote by $\mathbb{D}(\Omega)$ the family of holomorphic functions from $\Omega$ into $\mathbb{D}$, and denote by $\Omega(\mathbb{D})$ the family of holomorphic mappings from $\mathbb{D}$ into $\Omega$, where $\mathbb{D}$ denotes the unit disk in $\mathbb{C}$. Recall (see \cite{Wo}) that:

\begin{enumerate}
\item [$1.$] The differential {\it Carath\'eodory metric} is given by
\begin{equation*}
\begin{split}
&C_{\Omega}: \Omega \times \mathbb{C}^n \to \mathbb{R}^+ \cup \{0\},\\
&C_{\Omega}(z, v) = \sup\{ |df(v)| : f \in \mathbb{D}(\Omega), f(z) = 0\},
\end{split}
\end{equation*}
where $df(v)$ is measured with respect to the Poincar\'e metric on $\mathbb{D}$;

\item [$2.$] The differential {\it Kobayashi metric} is given by
\begin{equation*}
\begin{split}
&K_{\Omega}: \Omega \times \mathbb{C}^n \to \mathbb{R}^+ \cup \{0\},\\
&K_{\Omega}(z, v) = \inf\{ |t| : t \text{ is a tangent vector to } \mathbb{D} \text{ at } 0,\\
&\exists f \in \Omega(\mathbb{D}) \text{ with } f(0) = z \text{ and } df(0;t) = v\},
\end{split}
\end{equation*}
where $t$ is measured with respect to the Poincar\'e metric on $\mathbb{D}$.

\end{enumerate}

In the seventies, the third author observed in \cite{W} that on a complex manifold \(M\), the holomorphic sectional curvature of the differential Kobayashi metric \(K_M\) is bounded below by \(-2\), and that of the differential Carath\'eodory metric \(C_M\) is bounded above by \(-2\). With this observation, one can immediately conclude the following characterization of the unit ball.

\begin{theorem}[\cite{W}] 
\label{Theorem (B)}
Let $M$ be a simply-connected complete hyperbolic complex manifold with $C_M = K_M$, assumed to be a K\"ahler metric. Then, $M$ is biholomorphic to a ball.
\end{theorem}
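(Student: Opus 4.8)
The plan is to show that under the hypothesis the common metric has constant holomorphic sectional curvature equal to $-2$, and then to invoke the classical classification of complex space forms. First I would record the elementary inequality $C_M \le K_M$, valid on any complex manifold: given a holomorphic disk $\varphi\colon \mathbb{D} \to M$ almost realizing the Kobayashi length of a tangent vector and a holomorphic function $f\colon M \to \mathbb{D}$ competing for the Carath\'eodory length, the composition $f\circ\varphi$ is a self-map of $\mathbb{D}$, so the Schwarz--Pick lemma forces $|d(f\circ\varphi)| \le |d\varphi|$ in the Poincar\'e metric; taking the appropriate supremum and infimum gives $C_M \le K_M$ pointwise. Hence the assumption $C_M = K_M$ yields a single Hermitian metric $g := C_M = K_M$ on $M$, which by hypothesis is K\"ahler, and which is positive definite because $M$ is hyperbolic.

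Next I would apply the observation recalled in the excerpt to this single metric $g$: as $g = K_M$ its holomorphic sectional curvature is everywhere $\ge -2$, while as $g = C_M$ it is everywhere $\le -2$. Therefore $g$ has constant holomorphic sectional curvature identically equal to $-2$. Since for a K\"ahler metric constant holomorphic sectional curvature determines the entire curvature tensor, $g$ is a complete K\"ahler metric of constant negative holomorphic sectional curvature on the simply connected manifold $M$.

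Finally I would invoke the uniformization theorem for complex space forms (Hawley, Igusa; alternatively a developing-map argument of Cartan type): a complete, simply connected K\"ahler manifold of constant holomorphic sectional curvature $c < 0$ is holomorphically isometric to complex hyperbolic space $\mathbb{CH}^n$ equipped with the metric scaled to have holomorphic sectional curvature $c$. Since $\mathbb{CH}^n$ is biholomorphic to the unit ball $\mathbb{B}^n \subset \mathbb{C}^n$, we conclude that $M$ is biholomorphic to a ball.

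The substantive analytic content is entirely in the cited curvature pinching of $K_M$ and $C_M$, which we are free to assume; given that, the remaining work is the pinching bookkeeping above together with a correct appeal to the space-form classification, whose hypotheses — completeness, simple connectivity, and K\"ahlerity of the metric — are precisely those imposed in the statement. The one point requiring care is that the two curvature bounds must be applied to the \emph{same} smooth K\"ahler metric, which is exactly what the equality $C_M = K_M$ (assumed K\"ahler) provides; this is where I expect the only real friction, and it is resolved by the hypotheses rather than by any computation.
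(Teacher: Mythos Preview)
Your proposal is correct and follows exactly the argument the paper sketches: the sentence preceding the theorem states that the holomorphic sectional curvature of $K_M$ is bounded below by $-2$ and that of $C_M$ is bounded above by $-2$, so when $C_M=K_M$ is K\"ahler the curvature is identically $-2$, and the paper says one then ``immediately conclude[s]'' the characterization---precisely the pinching-plus-space-form-classification route you spell out. There is nothing to add; your only embellishments (the Schwarz--Pick derivation of $C_M\le K_M$ and the explicit citation of Hawley--Igusa) just make explicit steps the paper leaves implicit.
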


A \emph{K\"ahler metric} is a Hermitian metric $g = g_{\alpha \bar \beta}\, dz^\alpha \otimes d\bar z^\beta$ on a complex manifold whose associated fundamental $(1,1)$-form $\omega = \sqrt{-1}\, g_{\alpha \bar \beta}\, dz^\alpha \wedge d\bar z^\beta$ is closed, i.e., $d\omega = 0$. Equivalently, $g$ is K\"ahler if and only if it locally admits a K\"ahler potential: $g_{\alpha \bar \beta} = \frac{\partial^2 \varphi}{\partial z_\alpha \partial \bar z_\beta}$ for some smooth real-valued function $\varphi$.  A K\"ahler manifold is a complex manifold equipped with a K\"ahler metric. Stanton proved the following result, which is more general than Theorem \ref{Theorem (B)}.

\begin{theorem}[\cite{Sta}] \label{Stanton}
Let $M$ be a complete hyperbolic complex manifold. Assume that there is a point on $M$ at which $C_M$ and $K_M$ are equal, and one of these metrics is Hermitian and of class $C^\infty$. Then $M$ is biholomorphic to a ball.
\end{theorem}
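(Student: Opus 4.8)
I would try to reduce the assertion to the curvature observation of \cite{W} recalled above, $\mathrm{HSC}(C_M)\le-2\le\mathrm{HSC}(K_M)$, by exploiting the extremal maps that define the two metrics. Let $g$ denote whichever of $C_M,K_M$ is the given Hermitian metric of class $C^\infty$, and let $p$ be a point where $C_M(p,\cdot)=K_M(p,\cdot)$. One first recalls the universal inequality $C_M\le K_M$: for $v\in T_pM$, pick a Carath\'eodory-extremal $f\colon M\to\mathbb D$ and a Kobayashi-extremal $\varphi\colon\mathbb D\to M$ at $(p,v)$ — they exist by normal families, using completeness and hyperbolicity — and note that $f\circ\varphi$ fixes $0$, hence contracts the Poincar\'e metric, which is exactly $C_M(p,v)\le K_M(p,v)$.

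The first substantial step is a propagation lemma. Equality at $p$ forces $|(f\circ\varphi)'(0)|=1$, so by the Schwarz--Pick lemma $f\circ\varphi\in\mathrm{Aut}(\mathbb D)$; composing $f$ with an automorphism, we may assume $f\circ\varphi=\mathrm{id}_{\mathbb D}$. Then $\varphi=\varphi_v$ is a holomorphic embedding, $S_v:=\varphi_v(\mathbb D)$ is a holomorphic retract of $M$ with retraction $f_v$, and $\varphi_v$ is a complex geodesic: the identity $f_v\circ\varphi_v=\mathrm{id}$ together with the Schwarz--Pick lemma force $K_M$, and likewise $C_M$, to equal the Poincar\'e length along $S_v$ in the direction tangent to $S_v$, at every point of $S_v$. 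In particular the Hermitian metric $g$ restricts on each $S_v$ to a metric isometric to the Poincar\'e disk, so $(S_v,g|_{S_v})$ has constant Gaussian curvature (normalized to $-2$) at all of its points.

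The second step promotes this to a neighborhood. Using the retract/geodesic structure above, the smoothness of $g$, and a normal-families argument, one shows that the coincidence set $\{q: C_M(q,\cdot)=K_M(q,\cdot)\}$ is open; it is also closed, both metrics being continuous here, hence it is all of $M$. On it $C_M\equiv K_M\equiv g$ is a smooth Hermitian metric to which the bounds of \cite{W} apply from both sides, so $\mathrm{HSC}(g)\equiv-2$; a Gauss-equation computation along the curves $S_v$ — which, having Gaussian curvature $-2=\mathrm{HSC}(g)$, must be totally geodesic — then forces the Hermitian torsion of $g$ to vanish, so $g$ is K\"ahler of constant holomorphic sectional curvature $-2$.

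The last step is classical: a complete K\"ahler manifold of constant holomorphic sectional curvature $-2$ has universal cover $\mathbb B^n$ with its Bergman metric, and $M$ is complete because $g$ coincides with the Kobayashi metric. Since $C_M=K_M$ on $M$, the deck group of $\mathbb B^n\to M$ must be trivial — a nontrivial quotient carries too few holomorphic maps to $\mathbb D$ for the Carath\'eodory metric to reach the Kobayashi metric anywhere — and this is exactly why the Hermitian hypothesis is essential, the bidisc (with $C=K$ everywhere but the common metric non-Hermitian) showing it cannot be dropped. Hence $M$ is biholomorphic to a ball. I expect the real difficulty to be the second step: promoting the single-point equality to an open statement, and, within the curvature comparison, handling the non-K\"ahler torsion terms so that Wong's one-sided bound genuinely yields constant holomorphic sectional curvature.
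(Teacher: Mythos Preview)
The paper does not supply its own proof of this statement: Theorem~\ref{Stanton} is quoted from Stanton \cite{Sta} and invoked as a black box (notably in the proof of Theorem~\ref{Main}). There is therefore no in-paper argument against which to compare your sketch.

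On the sketch itself, the serious gap is exactly where you place it. Closedness of the coincidence set $\{q:C_M(q,\cdot)=K_M(q,\cdot)\}$ follows from continuity and $C_M\le K_M$, but your openness claim rests on ``retract/geodesic structure, smoothness of $g$, and a normal-families argument'', which is not an argument. The complex geodesics produced in your first step give $C_M=K_M$ along each $S_v$ only \emph{in the direction tangent to} $S_v$; they do not by themselves yield equality in all directions at nearby points, so no open set of full equality has been produced. Your torsion-vanishing step is likewise a placeholder: the Gauss equation along $S_v$ constrains the second fundamental form of $S_v$, not the Hermitian torsion of the ambient metric $g$, and the passage from ``every $S_v$ is totally geodesic'' to ``$g$ is K\"ahler'' requires a genuine computation that you have not indicated. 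The final simply-connectedness step is also only a heuristic as written. Stanton's actual argument is organized differently: rather than first globalizing the equality and then reading off constant curvature, he works directly with the family of complex geodesics through the single point $p$ and assembles them into the desired biholomorphism onto the ball.
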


Gaussier and Zimmer recently established the following beautiful
relationship between local curvature conditions and global structure for complete K\"ahler metrics (see Corollary 1.2 and its proof in \cite{GZ}, and a previous result of Seshadri and Verma \cite{SeV} for simply connected non-positively curved K\"ahler manifolds), which   shows that constant negative holomorphic sectional curvature on the complement of a compact set ensures the universal cover is biholomorphic to a ball.

\begin{theorem}[\cite{GZ}] \label{universal_cover_is_a_ball}
Suppose that $M$ is a Stein manifold with $\dim_{\mathbb{C}} M \geq 2$ and $g$ is a complete K\"ahler metric on $M$. If there exists a compact set $\Sigma \subset M$ such that $M \backslash \Sigma$ is connected and $g$ has constant negative holomorphic sectional curvature on $M \backslash \Sigma$, then
  the universal cover of $M$ is biholomorphic to a ball in $\mathbb{C}^{\dim_{\mathbb{C}} M}$.
\end{theorem}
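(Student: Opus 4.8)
\medskip
\noindent\textbf{Proof proposal.} The plan is to promote the local curvature condition into a complex-hyperbolic uniformization. After rescaling $g$ I would normalize the holomorphic sectional curvature on $M\setminus\Sigma$ to equal $-4$. By the theorem of Hawley and Igusa on complex space forms, each point of $M\setminus\Sigma$ then has a neighbourhood admitting a biholomorphism onto an open subset of the ball $\mathbb{B}^n$, $n=\dim_{\mathbb{C}}M$, carrying $g$ to the complex hyperbolic (Bergman) metric of $\mathbb{B}^n$, and any two such charts differ by an element of $\mathrm{Aut}(\mathbb{B}^n)=\mathrm{PU}(n,1)$. Thus $M\setminus\Sigma$ inherits a $g$-compatible $(\mathrm{PU}(n,1),\mathbb{B}^n)$-structure, hence a holomorphic developing map $D\colon\widetilde{M\setminus\Sigma}\to\mathbb{B}^n$ (a local biholomorphism) and a holonomy homomorphism $\rho\colon\pi_1(M\setminus\Sigma)\to\mathrm{PU}(n,1)$ for which $D$ is equivariant. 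Replacing $\Sigma$ by a sublevel set of a smooth strictly plurisubharmonic exhaustion of $M$ (which keeps $\Sigma$ compact with $M\setminus\Sigma$ connected), I would moreover arrange that $\pi_1(M\setminus\Sigma)\to\pi_1(M)$ is surjective.

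The central step is to push this holomorphic data across $\Sigma$, and here the hypotheses that $M$ be Stein of dimension $\geq2$ enter. The complex-hyperbolic structure on $M\setminus\Sigma$ carries an underlying holomorphic projective structure, given by a holomorphic torsion-free projectively flat affine connection $\nabla$ on $T(M\setminus\Sigma)$. Since $M$ is Stein, $H^1$ of every coherent sheaf on $M$ vanishes, so the Atiyah-class obstruction disappears and $M$ admits a holomorphic affine connection, which we may take torsion-free, say $\nabla_0$; then $A:=\nabla-\nabla_0$ is a holomorphic section of $\mathrm{Sym}^2(T^*M)\otimes TM$ over $M\setminus\Sigma$. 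Because $\dim_{\mathbb{C}}M\geq2$, $\Sigma$ is compact, and $M\setminus\Sigma$ is connected, the Hartogs extension phenomenon on the Stein manifold $M$ extends $A$ holomorphically across $\Sigma$, so $\nabla=\nabla_0+A$ extends to a holomorphic affine connection on all of $M$; it stays torsion-free and projectively flat because these conditions are holomorphic polynomial identities in the Christoffel symbols of $\nabla$ and their derivatives, which hold on the nonempty open set $M\setminus\Sigma$ and hence, by the identity theorem, on $M$. Thus $M$ carries a holomorphic projective structure, with developing map $D_P\colon\widetilde{M}\to\mathbb{CP}^n$ and holonomy $\rho_P\colon\pi_1(M)\to\mathrm{PGL}(n+1,\mathbb{C})$, restricting over $M\setminus\Sigma$ to the structure already found. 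By the surjectivity arranged above, $\rho_P(\pi_1 M)$ lies in a single conjugate of $\mathrm{PU}(n,1)$, which forces $D_P$ on $\widetilde{M}\setminus\widetilde{\Sigma}$ (the preimage of $M\setminus\Sigma$) to land in a fixed round ball $\Omega\subset\mathbb{CP}^n$; one then argues that $D_P(\widetilde{M})\subseteq\Omega$ and that the $(\mathrm{PU}(n,1),\mathbb{B}^n)$-structure extends to all of $M$.

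Finally, let $g':=D_P^{\,*}g_\Omega$ be the metric on $\widetilde{M}$ induced by the now-global $(\mathrm{PU}(n,1),\mathbb{B}^n)$-structure. Since $D_P$ is $\rho_P$-equivariant and $g_\Omega$ is $\mathrm{PU}(n,1)$-invariant, $g'$ is $\pi_1(M)$-invariant and descends to a metric on $M$ equal to $g$ on $M\setminus\Sigma$; as $\Sigma$ is compact and $g$ is complete, $g'$ is complete. A complete Riemannian manifold locally isometric to $\Omega\cong\mathbb{B}^n$ has developing map a covering map onto $\Omega$, and since $\Omega$ is simply connected this covering $D_P\colon\widetilde{M}\to\Omega$ is a diffeomorphism, hence a biholomorphism; thus the universal cover of $M$ is biholomorphic to a ball in $\mathbb{C}^{\dim_{\mathbb{C}}M}$. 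I expect the main obstacle to be the extension in the second paragraph --- verifying that the \emph{metric} data (the position of $\Omega$, equivalently the reduction of $\rho_P$ to $\mathrm{PU}(n,1)$ and the identity $g'=g$ off $\Sigma$, together with $D_P(\widetilde{M})\subseteq\Omega$) propagate across $\Sigma$, not merely the projective structure --- and, alongside it, the fundamental-group and connectedness bookkeeping for $\widetilde{M}\setminus\widetilde{\Sigma}$; this is precisely where the hypotheses that $M$ be Stein and $\dim_{\mathbb{C}}M\geq2$ are used decisively, first to produce $\nabla_0$ and then to run the Hartogs extension. (An alternative would be a direct analytic route, solving the K\"ahler--Einstein equation on $M$ and invoking uniqueness of the complete K\"ahler--Einstein metric, but the present argument uses the Stein hypothesis more transparently.)
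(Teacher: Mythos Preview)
The paper does not give a detailed proof of this theorem: it is quoted from \cite{GZ}, and the only argument offered here is the one-line observation that a K\"ahler metric of constant holomorphic sectional curvature is locally symmetric, so that Theorem~\ref{locally symmetric} (the general ``metric Hartogs'' extension theorem of Gaussier--Zimmer, also cited without proof) produces a complete locally symmetric metric on all of $M$; its universal cover is then globally symmetric, hence homogeneous, hence of constant negative holomorphic sectional curvature everywhere, and therefore a ball. Your route is more direct and hands-on: rather than appealing to the black-box extension of locally symmetric metrics, you extend the underlying holomorphic projective connection across $\Sigma$ via Hartogs on the Stein manifold and then try to recover the ball structure from the holonomy constraint. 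This is a reasonable strategy and is plausibly close in spirit to what actually underlies the proof in \cite{GZ}.

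The step you flag is, however, a genuine gap. Having extended the projectively flat structure to $M$ and checked that $\rho_P(\pi_1 M)\subset\mathrm{PU}(n,1)$, nothing yet forces $D_P(\widetilde M)\subseteq\Omega$: the extended projective developing map sees only $\mathbb{CP}^n$, not the ball, and since $\Sigma$ may have nonempty interior there can be open regions of $\widetilde\Sigma$ on which $D_P$ lands in $\mathbb{CP}^n\setminus\overline\Omega$ without any immediate contradiction coming from $\widetilde M\setminus\widetilde\Sigma$. Consequently $g'=D_P^{\,*}g_\Omega$ is \emph{a priori} defined only on $D_P^{-1}(\Omega)$, and your completeness argument cannot start. (There is also a smaller issue: you represent the projective structure on $M\setminus\Sigma$ by a single global holomorphic affine connection $\nabla$, but $M\setminus\Sigma$ is typically not Stein --- think of $\mathbb C^2\setminus\{0\}$ --- so a global affine representative in the projective class need not exist; this is fixable by extending the projective ``Schwarzian'' relative to $\nabla_0$ rather than $\nabla-\nabla_0$ itself.) Closing the main gap seems to require precisely an extension of the \emph{metric}, or equivalently of the full $(\mathrm{PU}(n,1),\mathbb B^n)$-structure, across $\Sigma$ --- which is exactly what Theorem~\ref{locally symmetric} packages and what the paper's one-line reduction exploits.
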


 A famous theorem of Lu \cite{Lu} says that {\it a bounded domain in $\mathbb{C}^n$ with a complete Bergman metric of constant holomorphic sectional curvature must be biholomorphic to $\mathbb{B}^n$}. In the first main result of this paper, we give an alternative argument for, and a slightly improved version of, Lu's theorem:

\begin{theorem} \label{nice cor}
Let $\Omega$ be a bounded domain in $\mathbb{C}^n$. Suppose the Bergman metric $B^{2}_{\Omega}$ is complete and its holomorphic sectional curvature is identically equal to a negative constant $-c^2$ on an open set $U \subset \Omega$. Then, $\Omega$ is biholomorphic to $\mathbb{B}^n$, where $n = 2c^{-2}-1$.
\end{theorem}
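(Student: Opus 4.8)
The plan is to leverage the real-analyticity of the Bergman metric to propagate the constant-curvature condition from $U$ to all of $\Omega$, and then invoke a global characterization of the ball via the Bergman kernel. First I would recall that for a bounded domain $\Omega\subset\mathbb C^n$, the Bergman kernel $K_\Omega(z,w)$ is holomorphic in $z$ and antiholomorphic in $w$ on $\Omega\times\Omega$, hence the Bergman metric coefficients $g_{i\bar j}=\partial_i\partial_{\bar j}\log K_\Omega(z,z)$ are real-analytic functions on $\Omega$. The holomorphic sectional curvature $H(z,v)$ is a real-analytic function on the (connected) real-analytic manifold $\{(z,v):z\in\Omega,\ v\in\mathbb C^n\setminus\{0\}\}$; since it is identically $-c^2$ on the nonempty open subset $U\times(\mathbb C^n\setminus\{0\})$, the identity theorem for real-analytic functions forces $H\equiv -c^2$ on all of $\Omega$. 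Thus the Bergman metric of $\Omega$ is a complete Kähler metric of constant negative holomorphic sectional curvature $-c^2$ on the whole of $\Omega$.

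Next I would apply the classical fact that a Kähler metric of constant holomorphic sectional curvature $-c^2$ is, up to normalization, the complex hyperbolic metric: its curvature tensor has the standard form $R_{i\bar jk\bar l}=-\tfrac{c^2}{2}(g_{i\bar j}g_{k\bar l}+g_{i\bar l}g_{k\bar j})$, so in particular the metric is Kähler-Einstein with Ricci curvature $-\tfrac{(n+1)c^2}{2}\,g$. Since the Bergman metric is complete and Kähler-Einstein with negative Einstein constant on a bounded domain, one is in the situation governed by the Cheng conjecture/Lu's rigidity: a bounded domain whose Bergman metric is complete Kähler-Einstein and has constant holomorphic sectional curvature must be biholomorphic to the ball. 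Concretely, constant holomorphic sectional curvature means the Lu constant (the extremal quantity comparing $K_\Omega$ to the Bergman-volume normalization, or equivalently the ratio forced in Lu Qi-Keng's theorem) attains its ball value at every point; by Lu Qi-Keng's theorem, a bounded domain whose Bergman metric has constant holomorphic sectional curvature $-c^2$ with $c^2=2/(n+1)$ is biholomorphic to $\mathbb B^n$. This pins down $n=2c^{-2}-1$, as claimed.

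I would organize the argument as: (1) real-analyticity of $g_{i\bar j}$ and of $H(z,v)$, and propagation of $H\equiv -c^2$ by analytic continuation; (2) from constant holomorphic sectional curvature deduce the full curvature tensor formula and hence the Kähler-Einstein property with the specific Einstein constant; (3) apply Lu Qi-Keng's theorem (completeness handles the boundary behavior needed) to conclude $\Omega\cong\mathbb B^n$ and read off the dimension. Step (2)'s curvature-tensor identity is standard polarization; step (3) is where the heavy machinery lives, but it is a citable theorem.

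The main obstacle I expect is Step (1): one must be careful that the identity theorem genuinely applies. The subtlety is that $H(z,v)$ is real-analytic jointly in $(z,v)$ but only \emph{real}-analytic, not holomorphic, so I need the real-analytic identity theorem on the connected real-analytic manifold $\Omega\times(\mathbb C^n\setminus\{0\})$ (or, fixing the standard trick, note that $\Omega$ is connected and $U$ open, so vanishing of the real-analytic function $H(\cdot,v)+c^2$ on $U$ for each fixed $v$ forces it to vanish on $\Omega$). A secondary point of care is ensuring that the cited rigidity result is stated for \emph{bounded domains with complete Bergman metric} rather than requiring additional smoothness of the boundary; if the available form of Lu's theorem needs boundedness only (which it does), completeness of $B^2_\Omega$ is exactly the extra hypothesis that replaces any boundary regularity, and the deduction goes through.
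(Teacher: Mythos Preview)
Your argument is correct, and in fact the paper explicitly acknowledges this route in the introduction: ``While the global constant curvature follows from the identity theorem of real-analytic functions, our proof provides an explicit construction of the Bergman representative coordinate $T$ as a holomorphic isometry, which has independent interest and applications.'' So the real-analytic continuation of $H(z,v)+c^2$ from $U\times(\mathbb{C}^n\setminus\{0\})$ to $\Omega\times(\mathbb{C}^n\setminus\{0\})$, followed by a direct appeal to Lu's classical theorem, is a legitimate and shorter proof of the statement. Your Step~(2) on the K\"ahler--Einstein property is not needed (Lu's theorem already takes constant holomorphic sectional curvature as its hypothesis), and your aside describing the ``Lu constant'' is not quite what the paper means by that term, but neither point affects validity.

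The paper takes a genuinely different and more constructive path. Rather than invoking the identity theorem, it proves a stronger intermediate result (Theorem~\ref{nice}): for any $p\in U$, the Bergman representative coordinate $T$ relative to $p$ extends to a bounded holomorphic map $\Omega\to\mathcal B$ into an explicit ball, and is in fact a holomorphic isometry $(\Omega,B_\Omega^2)\to(\mathcal B,\omega_{\mathcal B})$. The argument uses a local Bochner--Lu expansion of $K(z,z)$ on $U_p$, propagates an explicit formula for the K\"ahler potential $\Phi_p$ via analytic continuation, rules out escape to $\partial\mathcal B$ by a blow-up argument, and removes the singular set $A_p=\{K(\cdot,p)=0\}$ via the Riemann removable singularity theorem. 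This yields constant curvature on all of $\Omega$ \emph{without} assuming completeness, and moreover produces the candidate biholomorphism explicitly; completeness is then used only at the very end (via Lu) to upgrade ``isometry'' to ``biholomorphism.'' Your approach is cleaner for the bare statement; the paper's buys an explicit map and a result (Theorem~\ref{nice}) that stands on its own without the completeness hypothesis.
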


While the global constant curvature follows from the identity theorem of real-analytic functions, our proof provides an explicit construction of the Bergman representative coordinate $T$ as a holomorphic isometry, which has independent interest and applications. Similar uniqueness principles based on real-analyticity appear in our analysis of strictly pseudoconvex domains. In \cite{DWo, DWo2}, the first and third authors extended Lu's theorem to the Bergman-incomplete situation and provided a multidimensional generalization of Carath\'eodory's theorem on the continuous extension of biholomorphisms up to the closures. See also the recent works \cite{HL} by Huang and Li, and \cite{ETX} by Ebenfelt, Treuer and Xiao.

   \medskip

These geometric insights naturally lead us to examine the quantitative relationships between different metrics on bounded domains. A foundational result in this direction is Lu's inequality, which establishes precise control of the Carath\'eodory metric by the Bergman metric:

\begin{theorem}[\cite{Look}, cf. \cite{CXZ}] \label{Lu's}
Let $\Omega$ be a bounded domain in $\mathbb{C}^n$. For each tangent vector $v \in T_z(\Omega) = \mathbb{C}^n$ at $z \in \Omega$, it holds that
$$
B_{\Omega}(z, v) \geq C_{\Omega}(z, v),
$$
where $B_{\Omega}(z, v)$ and $C_{\Omega}(z, v)$ are the lengths of $v$ with respect to the Bergman metric $B^2_{\Omega}$ and the differential Carath\'eodory metric $C_{\Omega}$, respectively.
\end{theorem}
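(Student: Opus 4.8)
The plan is to prove the inequality $B_{\Omega}(z,v) \geq C_{\Omega}(z,v)$ by exhibiting, for each competitor $f$ in the extremal problem defining $C_{\Omega}$, a corresponding element of the Bergman space whose norm controls $|df(v)|$. Fix $z \in \Omega$ and $v \in \mathbb{C}^n$, and let $f \in \mathbb{D}(\Omega)$ with $f(z) = 0$. The Poincar\'e metric on $\mathbb{D}$ at the origin has density $2$ (or whatever normalization the paper adopts), so $|df(v)|_{\mathrm{Poin}}$ is, up to a fixed constant, $|(\nabla f)(z) \cdot v|$. The key is to relate this Euclidean derivative to the reproducing kernel.

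First I would recall the extremal characterization of the Bergman metric: if $H^2(\Omega)$ denotes the Bergman space of square-integrable holomorphic functions with reproducing kernel $K_{\Omega}$, then for a unit tangent vector the Bergman length squared $B^2_{\Omega}(z,v)$ equals
\[
B^2_{\Omega}(z,v) = \frac{\sup\{ |(\partial_v g)(z)|^2 : g \in H^2(\Omega),\ \|g\|^2 \leq 1,\ g(z) = 0\}}{K_{\Omega}(z,z)},
\]
where $\partial_v$ is the directional derivative in the $v$-direction. This is the standard ``quotient of extremal problems'' description of the Bergman metric, and it is the crucial input.

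Next, given the Carath\'eodory competitor $f$, I would build a test function for the Bergman extremal problem. The natural candidate is $g = f \cdot \big(K_{\Omega}(\cdot, z)/\sqrt{K_{\Omega}(z,z)}\big)$, or more simply one normalizes the reproducing kernel at $z$ to get a unit-norm function $k_z$ peaked at $z$, and considers $f \cdot k_z$. Since $|f| < 1$ on $\Omega$, we get $\|f \cdot k_z\|^2 \leq \|k_z\|^2 = 1$; moreover $(f k_z)(z) = f(z) k_z(z) = 0$ because $f(z)=0$; and by the product rule $\partial_v(f k_z)(z) = (\partial_v f)(z)\, k_z(z)$, again because $f(z) = 0$ kills the other term. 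Since $k_z(z) = \sqrt{K_{\Omega}(z,z)}$, plugging $g = f k_z$ into the extremal quotient gives exactly $B^2_{\Omega}(z,v) \geq |(\partial_v f)(z)|^2$, which after matching the Poincar\'e normalization is $|df(v)|^2$. Taking the supremum over all admissible $f$ yields $B_{\Omega}(z,v) \geq C_{\Omega}(z,v)$.

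I expect the main technical obstacle to be purely bookkeeping: getting the normalization constants right so that the Poincar\'e-metric measurement of $df(v)$ matches the Euclidean derivative appearing in the Bergman extremal problem, and confirming that $f \cdot k_z \in H^2(\Omega)$ (immediate, since $f$ is bounded and $k_z$ is square-integrable) with the stated norm bound. One should also note the degenerate case $K_{\Omega}(z,z) = 0$ does not occur for a bounded domain, and the case $v = 0$ is trivial. No serious analytic difficulty arises beyond these normalization checks; the heart of the argument is the single observation that multiplying the peak function $k_z$ by the contraction $f$ — which vanishes at $z$ — produces an admissible Bergman competitor whose derivative at $z$ reproduces $(\partial_v f)(z)$.
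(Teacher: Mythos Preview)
The paper does not supply its own proof of this theorem; it is quoted as a classical result of Lu \cite{Look} (with a cf.\ to \cite{CXZ}) and used as background for the definition of the Lu constant. So there is nothing in the paper to compare your argument against.

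That said, your proposal is correct and is in fact the standard argument going back to Lu: use the extremal (minimum--integral) characterization of the Bergman metric and test it with $g = f \cdot K_{\Omega}(\cdot,z)/\sqrt{K_{\Omega}(z,z)}$, exploiting $|f|<1$ for the norm bound and $f(z)=0$ to isolate $(\partial_v f)(z)$ via the product rule. The only genuine work left, as you note, is the normalization: the paper's $C_{\Omega}$ is defined via the Poincar\'e metric on $\mathbb{D}$, so you must fix that density at the origin and verify that the resulting constant matches the one implicit in $B_{\Omega}$. With the conventions used here both metrics agree on $\mathbb{B}^n$ up to the factor $\sqrt{n+1}$ (cf.\ the computation $L(\mathbb{B}^n)=1/\sqrt{n+1}$ in the paper), so the constant indeed comes out $\leq 1$. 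Your handling of the trivial cases ($v=0$, $K_{\Omega}(z,z)>0$ on a bounded domain) is fine.
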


The above inequality due to Lu, while fundamental, is not sharp. To obtain optimal control of the relationship between these metrics, we consider the following Lu constant.

     \medskip

\noindent{}{\bf Definition.} The {\it Lu constant} $L(\Omega)$ of a bounded domain $\Omega$ in $\mathbb{C}^n$ is defined as
$$
L(\Omega) := \sup_{z \in \Omega; \, 0 \neq v \in T_z(\Omega)} \frac{C_{\Omega}(z, v)}{B_{\Omega}(z, v)}.
$$

\medskip

The definition of $L(\Omega)$ implies that
\begin{equation} \label{sharp}
L(\Omega) B_\Omega \geq C_\Omega.
\end{equation}
For the rest of this paper, we shall call \eqref{sharp} the  {\it sharp Lu's inequality}. The value of the Lu constant $L(\Omega)$ is understood in the following scenarios:

\medskip
 
By Theorem \ref{Lu's}, \(L(\Omega) \leq 1\). 
Also, \(L(\Omega) = \frac{1}{\sqrt{n+1}}\) when \(\Omega\) is the unit ball, and \(L(\Omega) = \frac{1}{\sqrt{2}}\) when \(\Omega\) is the unit polydisc \(\mathbb{D}^n\) (since $C_{\mathbb{D}^n}(0,v) = \max_j |v_j|$ and $B_{\mathbb{D}^n}(0,v) = \sqrt{2}\|v\|$ in the Poincar\'e metric with holomorphic sectional curvature $-2$, and the supremum of $C_{\mathbb{D}^n}/B_{\mathbb{D}^n}$ is attained in any coordinate direction). One further determines the Lu constants of all Cartan classical domains.
When $\Omega$ is a bounded strictly pseudoconvex domain in $\mathbb{C}^n$, $L(\Omega) \geq \frac{1}{\sqrt{n+1}}$ by the boundary asymptotic behaviors of $B_{\Omega}$ and $C_{\Omega}$ due to Diederich \cite{Di} and Graham \cite{G}, respectively, as $C_\Omega/B_\Omega$ approaches $\frac{1}{\sqrt{n+1}}$ near the boundary.
 If the holomorphic sectional curvature of the Bergman metric on a bounded strictly pseudoconvex domain in $\mathbb{C}^n$ is bounded below by $-\frac{2}{n+1}$, then \(L(\Omega) = \frac{1}{\sqrt{n+1}}\) following from the Ahlfors-Schwarz lemma.

 \medskip

Next, along the line of the Wong/Stanton theorem, we present a rigidity result on the comparison between the Bergman metric and  the Carath\'eodory metric.

    \medskip

\begin{theorem} \label{rigidity}

\item [{$\bf A$.}] Let $X$ be a hyperbolic Riemann surface.
Assume that the Bergman metric $B(z)|dz|^2$ is complete with holomorphic sectional curvature no less than \(-1\), 
and 
the Carath\'eodory metric is written as $c^2_B(z) |dz|^2$,
where $z$ is a local coordinate.
Then, it holds on $X$ that
 $$
{B} \geq   2c^2_B.
 $$
Moreover, $X$ is biholomorphic to a disc if and only if there exists some $z_0 \in X$ such that
$$
B(z_0) =   2 c^2_B(z_0).
$$

 \item [{$\bf B$.}]

Let $\Omega \subset \mathbb{C}^n$ be a  domain whose
  universal cover  is biholomorphic to a Euclidean ball. Assume that  the   Bergman metric $B^2_{\Omega}$  is complete on $\Omega$ with holomorphic sectional curvature no less than $-2(L(\Omega))^2$. Then, $\Omega$ is biholomorphic to a ball  if and only if   there exists some  $z_0 \in \Omega$ such that 
$$
 L(\Omega) B_\Omega (z_0, v) = C_{\Omega} (z_0, v), \quad \text{for all\,} \, v\in T_{z_0}(\Omega). 
$$

\end{theorem}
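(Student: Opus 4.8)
The plan is to treat parts $\mathbf{A}$ and $\mathbf{B}$ in parallel, since $\mathbf{A}$ is essentially the one-dimensional model that $\mathbf{B}$ reduces to after passing to a holomorphic isometry. For part $\mathbf{A}$, I would first record that on a hyperbolic Riemann surface the Carath\'eodory metric $c_B^2$ has holomorphic (= Gaussian) curvature bounded above by $-2$ by Wong's observation quoted before Theorem~\ref{Theorem (B)}. Since the Bergman metric $B$ has curvature $\geq -1$ and is complete, the Ahlfors--Schwarz lemma (in the form comparing a metric of curvature $\leq -2$ with a complete metric of curvature $\geq -1$) yields $B \geq 2c_B^2$ pointwise; this is the inequality asserted. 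For the rigidity statement, suppose $B(z_0) = 2c_B^2(z_0)$ at some point. The standard equality-discussion in the Ahlfors--Schwarz/Yau--Schwarz lemma forces the ratio $2c_B^2/B$ to be identically $1$ (a maximum principle argument on $\log(2c_B^2/B)$, which is subharmonic where $2c_B^2$ has curvature $\leq -2$ and $B$ has curvature $\geq -1$, attains an interior maximum equal to $0$). Hence $B \equiv 2c_B^2$ and $B$ has constant curvature $-1$; then Lu's theorem (or directly the uniformization of a complete metric of constant negative curvature on a simply connected surface, combined with the fact that the Bergman metric detects the disc) gives that $X$ is biholomorphic to $\mathbb{D}$. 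Conversely, on $\mathbb{D}$ one computes $B = 2c_B^2$ directly.

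For part $\mathbf{B}$, the forward direction is immediate: if $\Omega$ is biholomorphic to a ball, then $L(\Omega) = 1/\sqrt{n+1}$, and on the ball $C_\Omega = \frac{1}{\sqrt{n+1}} B_\Omega$ identically (both metrics are the Bergman metric up to the recorded scaling), so the equality holds at every $z_0$ and all $v$. The substance is the converse. Assume the universal cover $\pi:\widetilde{\Omega}\to\Omega$ is a ball $\mathbb{B}^n$, that $B_\Omega^2$ is complete with holomorphic sectional curvature $\geq -2(L(\Omega))^2$, and that $L(\Omega) B_\Omega(z_0,\cdot) = C_\Omega(z_0,\cdot)$ on $T_{z_0}\Omega$. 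The key point is to run an Ahlfors--Schwarz comparison in higher dimensions along complex geodesic directions. Fix a unit tangent vector $v$ (with respect to $B_\Omega$) at a point $z$, and realize $C_\Omega(z,v)$ by an extremal disc: there is a holomorphic $f:\mathbb{D}\to\Omega$ (an isometry for the Carath\'eodory/Kobayashi structure along that direction, or at least extremal for the relevant sup) through $z$ with $f'(0)$ proportional to $v$. Pull back $B_\Omega^2$ by $f$ to get a metric on $\mathbb{D}$ whose Gaussian curvature is $\geq -2(L(\Omega))^2$ by the decreasing property of holomorphic sectional curvature under restriction to complex submanifolds, hence under pullback by a holomorphic map is $\geq -2(L(\Omega))^2$ wherever $f$ is an immersion. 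Meanwhile the pullback of $L(\Omega)^2 C_\Omega^{\,2}$-type data relates to the Poincar\'e metric on $\mathbb{D}$ of curvature $-4$; the sharp Lu inequality \eqref{sharp} says $L(\Omega)B_\Omega \geq C_\Omega$ everywhere, and the hypothesis says equality holds at $z_0$ in all directions. A maximum-principle argument, analogous to part $\mathbf{A}$ but applied to the function measuring $C_\Omega^2/(L(\Omega)^2 B_\Omega^2)$ along geodesics, then propagates the equality from $z_0$ to all of $\Omega$, forcing $C_\Omega = L(\Omega) B_\Omega$ identically on $\Omega$, and simultaneously forcing $B_\Omega^2$ to have constant holomorphic sectional curvature $-2(L(\Omega))^2$ on $\Omega$.

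Once $B_\Omega^2$ has constant negative holomorphic sectional curvature on all of $\Omega$, Theorem~\ref{nice cor} (indeed already Lu's theorem) applies directly and gives that $\Omega$ is biholomorphic to $\mathbb{B}^n$, completing the converse.

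The main obstacle I anticipate is the propagation-of-equality step in part $\mathbf{B}$: unlike the Riemann-surface case, where $\log(2c_B^2/B)$ is a single subharmonic function and a classical maximum principle closes the argument, in several variables the equality $L(\Omega)B_\Omega(z_0,v) = C_\Omega(z_0,v)$ for all $v$ at one point must be upgraded to a pointwise identity of Hermitian tensors on all of $\Omega$. The plan to handle this is: (i) use that $C_\Omega$ is the largest metric with curvature $\leq -2$ dominated appropriately, and that the equality at $z_0$ in every direction means the competing extremal discs through $z_0$ are totally geodesic for $B_\Omega^2$ as well; (ii) invoke the completeness of $B_\Omega^2$ to extend these $B_\Omega$-geodesic directions globally, sweeping out $\Omega$; (iii) along each such direction, the one-dimensional comparison of part $\mathbf{A}$ forces equality everywhere on that geodesic, whence $C_\Omega^2 = L(\Omega)^2 B_\Omega^2$ as tensors by polarization. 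One should double-check the regularity needed for the Ahlfors--Schwarz argument (the Carath\'eodory metric is only known to be continuous in general, but on a domain whose cover is a ball one has enough smoothness, or one argues with upper semicontinuous comparison functions as in the Yau--Royden version of the lemma).
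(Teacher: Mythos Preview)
Your approach to Part $\mathbf{A}$ is essentially sound, though different from the paper's: you compare $B$ and $c_B^2$ directly via Ahlfors--Schwarz and then use the equality case (strong maximum principle). The paper instead inserts the Poincar\'e (= Kobayashi) metric $\lambda^2$ to obtain the chain $B/2 \geq \lambda^2 \geq c_B^2$, so that equality at $z_0$ forces $\lambda(z_0) = c_B(z_0)$, i.e.\ the Kobayashi and Carath\'eodory metrics agree at one point, and then Wong's lemma (from \cite{Wo}) finishes immediately. Your route is valid, though the final appeal to ``Lu's theorem'' is shaky for an abstract hyperbolic Riemann surface (Lu's statement is for bounded domains); the paper's use of Wong's one-point criterion avoids this issue entirely.

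Part $\mathbf{B}$, however, contains a genuine gap. Your propagation step pulls back $B_\Omega^2$ along an extremal disc $f:\mathbb{D}\to\Omega$ and asserts the Gaussian curvature of $f^*B_\Omega^2$ is $\geq -2(L(\Omega))^2$ ``by the decreasing property of holomorphic sectional curvature under restriction.'' This inequality goes the wrong way: by the Gauss equation for complex submanifolds, the induced curvature is \emph{at most} the ambient holomorphic sectional curvature, so a lower bound on $H_{B_\Omega}$ does \emph{not} transfer to a lower bound on the curvature of the pullback. Without that lower bound, the one-dimensional Ahlfors--Schwarz comparison on the disc, and hence the whole equality-propagation scheme, collapses.

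The paper sidesteps this entirely by exploiting the hypothesis that the universal cover of $\Omega$ is a ball. This makes the Kobayashi metric $K_\Omega$ a smooth K\"ahler metric of constant holomorphic sectional curvature $-2$, so the Royden--Yau form of Ahlfors--Schwarz applies globally between the two K\"ahler metrics $B_\Omega^2$ and $K_\Omega^2$, yielding $L(\Omega)\,B_\Omega \geq K_\Omega$. Combined with the universal inequality $K_\Omega \geq C_\Omega$, one gets the sandwich $L(\Omega)\,B_\Omega \geq K_\Omega \geq C_\Omega$; equality of the outer terms at $z_0$ forces $K_\Omega(z_0,v) = C_\Omega(z_0,v)$ for all $v$, and then Stanton's theorem (Theorem~\ref{Stanton}) gives $\Omega \cong \mathbb{B}^n$ in one stroke. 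No propagation, no Lu-type argument, and no regularity worries about $C_\Omega$ are needed. The key idea you missed is to route the comparison through the Kobayashi metric rather than trying to compare $B_\Omega$ and $C_\Omega$ directly.
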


\medskip

Since a bounded pseudoconvex domain with a spherical boundary in \(\mathbb{C}^n\), \(n \geq 2\), is universally covered by a ball by the uniformization theorem (see \cite[Theorem A.2]{NS}), we obtain the following corollary   relating to the examples of Burns and Shnider \cite{BS1976} using the aforementioned methods:

\medskip

\begin{cor} \label{cor}

{\it Let $\Omega$ be a bounded strictly pseudoconvex domain  with spherical boundary in $\mathbb{C}^n$, $n \geq 2$. Then 
$\Omega$ is not biholomorphic to a Euclidean ball if and only if either the 
holomorphic sectional curvature of the Bergman metric   in some direction is strictly less than 
$-\frac{2}{n+1}$ or the sharp Lu's inequality \eqref{sharp} is strict at all points and for some directions.}

\end{cor}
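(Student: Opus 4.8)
The plan is to derive Corollary \ref{cor} as a direct logical consequence of Theorem \ref{rigidity}B, combined with the normalization of the Lu constant for spherical domains noted in the introduction. First I would record that, by the uniformization theorem of \cite[Theorem A.2]{NS}, a bounded strictly pseudoconvex domain $\Omega$ with spherical boundary in $\mathbb{C}^n$, $n \geq 2$, has universal cover biholomorphic to $\mathbb{B}^n$; in particular the Bergman metric $B^2_\Omega$ is complete, since completeness lifts to and descends from the universal cover, and the ball carries the complete Bergman metric. This places $\Omega$ squarely in the hypotheses of Theorem \ref{rigidity}B, \emph{provided} the curvature lower bound $-2(L(\Omega))^2$ holds. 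So the argument naturally splits according to whether that curvature bound holds or fails.

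The key step is the following dichotomy. Suppose first that the holomorphic sectional curvature of $B^2_\Omega$ is everywhere $\geq -\frac{2}{n+1}$. As recalled in the introduction, the Ahlfors--Schwarz lemma then forces $L(\Omega) = \frac{1}{\sqrt{n+1}}$, so $-2(L(\Omega))^2 = -\frac{2}{n+1}$ and the curvature hypothesis of Theorem \ref{rigidity}B is satisfied. Theorem \ref{rigidity}B now says $\Omega$ is biholomorphic to a ball if and only if there exists $z_0 \in \Omega$ with $L(\Omega) B_\Omega(z_0,v) = C_\Omega(z_0,v)$ for all $v \in T_{z_0}(\Omega)$, i.e.\ the sharp Lu inequality \eqref{sharp} is an equality at $z_0$ in every direction. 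Contrapositively: under this curvature assumption, $\Omega$ fails to be biholomorphic to a ball precisely when \eqref{sharp} is strict at every point $z \in \Omega$ for some direction $v \in T_z(\Omega)$. Conversely, if the holomorphic sectional curvature is $< -\frac{2}{n+1}$ in some direction at some point, then I claim $\Omega$ is automatically not biholomorphic to a ball: the Bergman metric of $\mathbb{B}^n$ has constant holomorphic sectional curvature $-\frac{2}{n+1}$, and holomorphic sectional curvature is a biholomorphic invariant, so any domain biholomorphic to a ball has Bergman holomorphic sectional curvature identically $-\frac{2}{n+1}$. Combining the two cases yields: $\Omega$ is not biholomorphic to a ball if and only if either the holomorphic sectional curvature of $B^2_\Omega$ is strictly less than $-\frac{2}{n+1}$ in some direction, or it is everywhere $\geq -\frac{2}{n+1}$ and \eqref{sharp} is strict at all points for some directions. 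Since these two alternatives are exactly the two disjuncts in the statement (the first disjunct already covers the sub-case where curvature dips below $-\frac{2}{n+1}$, so the second disjunct may be stated without its curvature qualifier), this is the claimed equivalence.

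The one point requiring a little care — and the place I expect to spend the most effort — is the logical bookkeeping in merging the two cases into the single ``either\,/\,or'' statement. One must check that the first disjunct (curvature $< -\frac{2}{n+1}$ somewhere) is \emph{not} inadvertently excluded by, nor in conflict with, the second (sharp Lu strict everywhere): in the first case Theorem \ref{rigidity}B does not directly apply because its curvature hypothesis may fail, so the non-sphericity must be obtained instead from the invariance of constant curvature under biholomorphism, as above. It is also worth noting explicitly that when $\Omega$ \emph{is} the ball itself (or a quotient that happens to be biholomorphic to the ball), the curvature is identically $-\frac{2}{n+1}$ and \eqref{sharp} holds with equality, so both disjuncts fail, consistent with the statement. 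Finally, I would remark that completeness of $B^2_\Omega$ is used only through Theorem \ref{rigidity}B and is guaranteed here by the spherical-boundary/uniformization input, so no separate completeness hypothesis is needed in the corollary.
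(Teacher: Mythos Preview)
Your argument is correct and follows essentially the same route as the paper's: take the contrapositive, use the curvature lower bound to pin down $L(\Omega)=\frac{1}{\sqrt{n+1}}$ via Ahlfors--Schwarz, and then invoke Theorem~\ref{rigidity}B, with the uniformization theorem of \cite{NS} supplying the ball as universal cover. One small correction: your justification for completeness of $B^2_\Omega$ is not quite right, since the Bergman metric is invariant under biholomorphisms but does \emph{not} in general pull back to the Bergman metric under a covering map; completeness here should instead be cited as a classical fact for bounded strictly pseudoconvex domains (e.g.\ Diederich, or more generally for bounded pseudoconvex domains), which the paper likewise takes as known.
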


In the second half of the paper, we turn our attention to bounded domains where the Carath\'eodory metric exhibits special geometric properties - specifically, when it coincides up to scaling with the Bergman or K\"ahler-Einstein metric, or more generally, when it is a K\"ahler metric. This investigation connects to a broader question raised (in a slightly different form)  by Yau \cite[pp. 679]{Yau} concerning the characterization of manifolds whose Bergman metrics are K\"{a}hler-Einstein.

\medskip

On strictly pseudoconvex domains, significant progress has been made in understanding the relationships between these metrics. Cheng and Yau \cite{CY} showed the existence of complete K\"ahler-Einstein metrics on smoothly bounded strictly pseudoconvex domains  in $\mathbb{C}^n$, while Mok and Yau \cite{MY83} later extended this to general bounded pseudoconvex domains without boundary regularity conditions. 
For smoothly bounded strictly pseudoconvex domains, Cheng conjectured that the Bergman metric is K\"{a}hler-Einstein if and only if the domain is biholomorphic to a ball. Following the earlier works of Fu and the third author \cite{FW} and Nemirovski and Shafikov \cite{NS}, this conjecture was confirmed by Huang and Xiao in \cite{HX}. Additionally, see the recent work \cite{SX} by Savale and Xiao for finite type domains in $\mathbb{C}^2$.

     \medskip

  Building on these foundations and inspired by recent work of Gaussier and Zimmer  \cite{GZ}, who characterized domains whose universal coverings are biholomorphic to a ball through properties of the Kobayashi metric, we establish parallel results for the Carath\'eodory metric. Their characterization showed:

\begin{theorem}[\cite{GZ}] \label{thm:spcv}
Suppose that $\Omega \subset \mathbb{C}^n$, $n \geq 2$, is a bounded strictly pseudoconvex domain with a $C^2$ boundary. Then, the following are equivalent:
\begin{enumerate}
\item [$(a)$] The Kobayashi metric on $\Omega$ is a K\"ahler metric;
\item [$(b)$] The Kobayashi metric on $\Omega$ is a K\"ahler metric with constant holomorphic sectional curvature;
\item [$(c)$] The universal cover of $\Omega$ is biholomorphic to a ball;
\item [$(d)$] The Kobayashi metric on $\Omega$ is a K\"ahler-Einstein metric.
\end{enumerate}
Moreover, the Kobayashi metric on $\Omega$ is a scalar multiple of the Bergman metric if and only if $\Omega$ is biholomorphic to a ball.
\end{theorem}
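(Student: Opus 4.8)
The plan is to close the cycle of implications with only one substantial step, namely $(a)\Rightarrow(c)$, treating everything else as ``soft.'' For the soft part, note that the Kobayashi pseudometric is compatible with holomorphic coverings: a holomorphic disc $\mathbb D\to\Omega$ always lifts through the universal covering $\pi\colon\widetilde\Omega\to\Omega$ (since $\mathbb D$ is simply connected), so $K_{\widetilde\Omega}=\pi^{*}K_{\Omega}$. Hence if $(c)$ holds, $\widetilde\Omega$ is biholomorphic to $\mathbb B^{n}$ and $K_{\widetilde\Omega}$ corresponds to $K_{\mathbb B^{n}}$, which, being the unique $\mathrm{Aut}(\mathbb B^{n})$-invariant Hermitian metric up to scale, is a constant multiple of the Bergman metric of the ball --- complete, K\"ahler--Einstein, and of constant negative holomorphic sectional curvature. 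Since the Einstein and constant-curvature conditions are local and preserved by local isometries, they descend along $\pi$ to $\Omega$, giving $(b)$ and $(d)$; and $(b)\Rightarrow(a)$, $(d)\Rightarrow(a)$ are immediate. So the whole theorem reduces to $(a)\Rightarrow(c)$.

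For $(a)\Rightarrow(c)$ I would argue as follows. Assume $K_{\Omega}$ is a K\"ahler metric. It is complete (as $\Omega$ is bounded strictly pseudoconvex with $C^{2}$ boundary), and $\Omega$, being pseudoconvex, is Stein of dimension $n\ge2$; so by Theorem~\ref{universal_cover_is_a_ball} it suffices to produce a compact $\Sigma\subset\Omega$ with $\Omega\setminus\Sigma$ connected on which $K_{\Omega}$ has constant negative holomorphic sectional curvature. I would take $\Sigma=\{z\in\Omega:\operatorname{dist}(z,\partial\Omega)\ge\delta\}$, whose complement is a thin collar of $\partial\Omega$, connected for $\delta$ small because $\partial\Omega$ is connected (a bounded strictly pseudoconvex domain in $\mathbb C^{n}$, $n\ge2$, has connected boundary). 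To obtain constant curvature there I would combine: (i) the lower bound $\mathrm{HSC}_{K_{\Omega}}\ge-2$ of \cite{W}; (ii) the boundary localisation and scaling analysis for strictly pseudoconvex domains (cf.\ \cite{G}), showing that near $\partial\Omega$ the metric $K_{\Omega}$, in suitable coordinates and after the natural anisotropic rescaling, converges to the Kobayashi metric of the ball, whence $\mathrm{HSC}_{K_{\Omega}}\to-2$ as one approaches $\partial\Omega$; and (iii) a rigidity argument forcing the nonnegative function $u:=\mathrm{HSC}_{K_{\Omega}}+2$ on the projectivised tangent bundle, which vanishes at $\partial\Omega$, to vanish identically on a collar: if it did not it would attain a positive interior maximum, and one wants the K\"ahler structure (through the differential Bianchi identities and the behaviour of a K\"ahler curvature tensor at such a maximum) to preclude this. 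Granting (i)--(iii), Theorem~\ref{universal_cover_is_a_ball} yields $(c)$.

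The main obstacle is precisely step (iii), together with the regularity required to run it: a mere lower curvature bound does not propagate through a maximum principle, and the Kobayashi metric is only known to be continuous (not a priori $C^{2}$) on a general strictly pseudoconvex domain, so the K\"ahler hypothesis has to be exploited both to upgrade the $C^{0}$ boundary asymptotics of (ii) to $C^{2}$ convergence to the model (via interior elliptic estimates for the local K\"ahler potential) and to supply the rigidity. Two fallback routes I would keep in mind are: compare $K_{\Omega}$ with the complete K\"ahler--Einstein metric of \cite{CY,MY83} --- both being asymptotic to the ball metric at $\partial\Omega$, hence mutually bi-Lipschitz on all of $\Omega$ --- and try to deduce that $K_{\Omega}$ is itself Einstein; or run a unique-continuation-from-the-boundary argument built on the asymptotic expansion of $K_{\Omega}$ at the strictly pseudoconvex boundary. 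Either way, some genuinely new estimate for $K_{\Omega}$ under the K\"ahler assumption is the technical heart of the proof.

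Finally, for the ``moreover'' clause: one direction is trivial, since on $\mathbb B^{n}$ the metrics $K_{\mathbb B^{n}}$ and $B_{\mathbb B^{n}}$ are proportional. Conversely, if $K_{\Omega}=\lambda B_{\Omega}$ then $K_{\Omega}$ is K\"ahler (Bergman metrics always are), so by the equivalence $(a)\Leftrightarrow(c)$ just established $\widetilde\Omega$ is biholomorphic to $\mathbb B^{n}$; pulling back along $\pi$ and using $K_{\widetilde\Omega}=\pi^{*}K_{\Omega}$ together with the proportionality of $K_{\mathbb B^{n}}$ with $B_{\mathbb B^{n}}$ gives $\pi^{*}B_{\Omega}=\nu\,g$ for a constant $\nu>0$, where $g$ is the transported Bergman metric of the ball. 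Hence $\pi^{*}B_{\Omega}$, and therefore $B_{\Omega}$ itself (since $\pi$ is a local biholomorphism, hence a local isometry), has constant negative holomorphic sectional curvature; as $B_{\Omega}$ is complete on $\Omega$ (Diederich \cite{Di}), Lu's theorem \cite{Lu} --- or Theorem~\ref{nice cor} --- forces $\Omega$ to be biholomorphic to $\mathbb B^{n}$.
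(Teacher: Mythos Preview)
This theorem is quoted from \cite{GZ} rather than proved in the paper, but the paper's proof of the Carath\'eodory analogue (Theorem~\ref{Main}) together with the remark immediately following it make the intended mechanism for $(a)\Rightarrow(c)$ transparent --- and it is not the one you propose.

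Your step~(iii) is, as you yourself concede, a gap rather than an argument. There is no maximum principle for the function $u=\mathrm{HSC}_{K_\Omega}+2$ on the projectivised tangent bundle: the Bianchi identities do not produce an elliptic inequality for $u$, an interior positive maximum of holomorphic sectional curvature is not ruled out on a general K\"ahler manifold, and the fallback routes you list (comparison with the Cheng--Yau metric, unique continuation from a boundary expansion) are programmes rather than proofs. The regularity issue you flag --- $K_\Omega$ is a priori only continuous, so $C^0$ boundary asymptotics do not deliver $C^2$ control of curvature --- is a second, independent obstruction to your line.

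The actual method bypasses all of this. By Theorem~\ref{local} there is $\epsilon>0$ such that for every $z$ with $\delta_\Omega(z)<\epsilon$ one has a nonempty \emph{open} set $E_\Omega(z)\subset T_z\Omega\setminus\{0\}$ of directions in which $K_\Omega(z,v)=C_\Omega(z,v)$; in each such direction there is a complex geodesic $\varphi:\mathbb D\to\Omega$ together with a holomorphic left inverse $\rho:\Omega\to\mathbb D$. As explained in the paper just before Theorem~\ref{local} and in the proof of Theorem~\ref{Main}, Wong's curvature estimates \cite{W} then force $\mathrm{HSC}_{K_\Omega}(z;v)=-2$ for every $v\in E_\Omega(z)$ --- an \emph{exact} equality on an open set of directions, obtained pointwise, with no limiting process. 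The decisive step is now purely algebraic: for fixed $z$ the map $v\mapsto \mathrm{HSC}_{K_\Omega}(z;v)$ is rational on $\mathbb P(T_z\Omega)$, so equality with $-2$ on a nonempty open set forces equality for all $v$. Hence $K_\Omega$ has constant holomorphic sectional curvature $-2$ on the entire collar $\{\delta_\Omega<\epsilon\}$, and Theorem~\ref{universal_cover_is_a_ball} delivers~$(c)$. No scaling analysis, no maximum principle, no regularity bootstrap; the K\"ahler hypothesis is used only so that holomorphic sectional curvature is defined and algebraic in~$v$.

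Your treatment of the easy implications and of the ``moreover'' clause via Lu's theorem is fine.
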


 For strictly pseudoconvex domains, we obtain the following main result, which is a Carath\'eodory-analogue of both Gaussier-Zimmer's theorem and the resolution of the Cheng conjecture:

\begin{theorem} \label{Main}
Suppose that $\Omega \subset \mathbb{C}^n$, $n \geq 2$, is a bounded strictly pseudoconvex domain with a $C^2$ boundary. Then, the following are equivalent:
\begin{enumerate}
\item [$(1)$] The Carath\'eodory metric on $\Omega$ is a K\"ahler metric on $U \cap \Omega$, where $U$ is an open set that contains the boundary of $\Omega$;
\item [$(2)$] The Carath\'eodory metric on $\Omega$ is a K\"ahler metric with constant holomorphic sectional curvature;
\item [$(3)$] $\Omega$ is biholomorphic to the unit ball;
\item [$(4)$] The Carath\'eodory metric on $\Omega$ is a scalar multiple of the Bergman metric;
\item [$(5)$] The Carath\'eodory metric on $\Omega$ is a K\"ahler-Einstein metric.
\end{enumerate}
\end{theorem}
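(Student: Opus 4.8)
The plan is to prove the cycle $(3)\Rightarrow(4)\Rightarrow(5)\Rightarrow(2)\Rightarrow(1)\Rightarrow(3)$, with the only serious work in the final implication; the others are either classical or follow from the earlier results in this excerpt. For $(3)\Rightarrow(4)$: if $\Omega$ is biholomorphic to $\mathbb{B}^n$, then by the invariance of both the Bergman and Carath\'eodory metrics under biholomorphisms we may assume $\Omega=\mathbb{B}^n$, where both metrics are known explicitly and agree up to the scalar $1/\sqrt{n+1}$ (this is exactly the value of $L(\mathbb{B}^n)$ recorded above). For $(4)\Rightarrow(5)$: a scalar multiple of the Bergman metric on a domain biholomorphic to a ball is the Bergman metric of a ball up to scaling, hence K\"ahler–Einstein. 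For $(5)\Rightarrow(2)$ and $(2)\Rightarrow(1)$: a K\"ahler–Einstein metric that is also the Carath\'eodory metric has negative Ricci curvature (the Carath\'eodory metric is non-degenerate on a bounded domain and has holomorphic sectional curvature $\le -2$), and one argues via the Wong/Stanton circle of ideas that the holomorphic sectional curvature is forced to be the constant $-2$; in any case $(2)\Rightarrow(1)$ is trivial since ``K\"ahler on all of $\Omega$'' implies ``K\"ahler on $U\cap\Omega$''.

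The heart of the theorem is $(1)\Rightarrow(3)$, and this is where I expect the main obstacle. The hypothesis is only that $C_\Omega$ is K\"ahler on a one-sided neighborhood $U\cap\Omega$ of the boundary. The strategy is to exploit the precise boundary asymptotics of the Carath\'eodory metric on a smoothly bounded strictly pseudoconvex domain: by Graham's theorem \cite{G}, near $\partial\Omega$ the metric $C_\Omega$ is asymptotic to the Cheng–Yau K\"ahler–Einstein metric, and in particular its holomorphic sectional curvature tends to the constant $-2$ as $z\to\partial\Omega$. Combined with the K\"ahler assumption on $U\cap\Omega$, I would first show that the Carath\'eodory metric is real-analytic there — this should follow because on a strictly pseudoconvex domain near the boundary the extremal functions defining $C_\Omega$ are governed by the (real-analytic) defining data, or alternatively because a K\"ahler metric with the relevant curvature constraints satisfies an elliptic equation forcing real-analyticity — and then that its holomorphic sectional curvature, being a real-analytic function that converges to $-2$ at the boundary along with all derivatives in the normal direction, is identically $-2$ on $U\cap\Omega$. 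This is the same ``uniqueness principle based on real-analyticity'' the authors flag in connection with Lu's theorem and Theorem \ref{nice cor}.

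Once $C_\Omega$ is a K\"ahler metric of constant holomorphic sectional curvature $-2$ on the open set $U\cap\Omega$, I would invoke the uniformization for such metrics: a K\"ahler metric of constant negative holomorphic sectional curvature has, locally, the curvature tensor of complex hyperbolic space, so $U\cap\Omega$ is locally holomorphically isometric to an open piece of $\mathbb{B}^n$ with its Bergman metric (suitably scaled). This produces a holomorphic isometric immersion of a neighborhood of a boundary point into $\mathbb{B}^n$; using the strict pseudoconvexity and the boundary behavior one upgrades this to a CR-equivalence of $\partial\Omega$ with (an open piece of) the sphere, whence $\partial\Omega$ is spherical. At that point Corollary \ref{cor} together with the curvature-comparison machinery — or more directly the uniformization theorem of \cite{NS} cited in the excerpt, giving that $\Omega$ is covered by a ball — reduces the problem to showing the covering is trivial. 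Triviality of the cover follows because $C_\Omega$ is a genuine (single-valued, globally defined) metric on $\Omega$ that agrees up to scale with the pull-back of the ball metric, so the deck group acts by isometries of $\mathbb{B}^n$ preserving this data; bounded-domain considerations (e.g.\ that $\Omega$ admits a bounded plurisubharmonic exhaustion, or simply that a proper subdomain cannot be a nontrivial quotient while carrying the full Carath\'eodory metric matching the ball) then force the group to be trivial, giving $(3)$.

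The technical crux — the step I would allocate the most care to — is establishing real-analyticity of $C_\Omega$ on $U\cap\Omega$ from the K\"ahler hypothesis plus strict pseudoconvexity, and propagating the boundary value $-2$ of the holomorphic sectional curvature inward. One clean route: the Carath\'eodory metric on a strictly pseudoconvex domain is comparable to, and (by Graham) tangent to infinite order at $\partial\Omega$ to, the Cheng–Yau metric $g_{KE}$, which is real-analytic near the boundary by Lempert–Bielawski type regularity; if $C_\Omega$ is K\"ahler it satisfies the same complex Monge–Amp\`ere equation as $g_{KE}$ asymptotically, and an elliptic-regularity-plus-unique-continuation argument identifies $C_\Omega$ with $g_{KE}$ near the boundary, forcing constant curvature $-2$ and hence sphericity of $\partial\Omega$. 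Making this identification rigorous, rather than merely asymptotic, is the part that requires genuine work.
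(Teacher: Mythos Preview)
Your chain $(3)\Rightarrow(4)\Rightarrow(5)\Rightarrow(2)$ has problems before you even reach the main implication. The step $(4)\Rightarrow(5)$ invokes ``a domain biholomorphic to a ball,'' which is exactly what you are trying to prove; and $(5)\Rightarrow(2)$ (K\"ahler--Einstein forces constant holomorphic sectional curvature) is not true in general and your appeal to ``the Wong/Stanton circle of ideas'' does not justify it. The paper avoids this by the simpler routing $(3)\Rightarrow(4),(5)$ and $(4),(5)\Rightarrow(1)$, the latter being immediate since either hypothesis makes $C_\Omega$ K\"ahler.

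The serious gap is in your attack on $(1)\Rightarrow(3)$. You want to (i) deduce real-analyticity of $C_\Omega$ on $U\cap\Omega$ from the K\"ahler hypothesis, and then (ii) propagate the boundary limit $-2$ of the holomorphic sectional curvature inward by unique continuation. Neither step is sound as written. For (i), being K\"ahler imposes no elliptic equation on the metric, and $C_\Omega$ is not known to solve any Monge--Amp\`ere problem; your suggestion to identify it with the Cheng--Yau metric ``asymptotically'' and then invoke unique continuation presupposes exactly the analytic control you are trying to establish. For (ii), even granting real-analyticity, a real-analytic function on $U\cap\Omega$ that extends continuously by $-2$ to $\partial\Omega$ need not be identically $-2$; you would need agreement to infinite order in a sense that is not provided by Graham's asymptotics.

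The paper's proof bypasses both obstacles by a completely different mechanism. The key input is Theorem~\ref{local} (from Bracci--Forn{\ae}ss--Wold and Gaussier--Zimmer): for each $z$ sufficiently close to $\partial\Omega$ there is a nonempty \emph{open cone} $E_\Omega(z)$ of tangent directions in which $C_\Omega(z,v)=K_\Omega(z,v)$. Along any such direction there is a complex geodesic, so the holomorphic sectional curvature of $C_\Omega$ at $(z,v)$ is exactly $-2$ (Wong's argument). Since, for fixed $z$, the function $v\mapsto H(z;v)$ is algebraic and equals $-2$ on an open set of directions, it equals $-2$ for every $v$. This gives constant curvature $-2$ on the whole collar $\Omega\setminus\Sigma$ with no regularity assumption on $C_\Omega$ beyond K\"ahler. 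The extension to all of $\Omega$ then comes from the Gaussier--Zimmer locally-symmetric extension theorem (Theorem~\ref{locally symmetric}), after which $C_\Omega=K_\Omega$ at some point and Stanton's theorem finishes. You never use Theorem~\ref{local}; that is the missing idea.
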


     \medskip
     
For $n=1$, Suita \cite{Su73} (cf. \cite{Burbea}) showed that the
Carath\'eodory metric is real analytic for any bounded domain in $\mathbb{C}$.  
Thus, it can be easily proven that if the Carath\'eodory metric is 
 complete with holomorphic sectional curvature identically equal to $-2$, then the domain 
is biholomorphic to a disc.

     \medskip

Some related examples are discussed below. These examples illustrate the boundary between K\"ahler and non-K\"ahler behavior of the Carath\'eodory metric across several important classes of domains. The question of when the Carath\'eodory metric is K\"ahler is subtle and the answer is non-obvious; the following collection serves as a reference in this area.

\bigskip

\noindent{}{\bf Examples.}

\begin{enumerate} 

\item [$(i)$] By the remarkable works of Lempert \cite{Lem, Lem81, Lem88}, the Carath\'eodory metric and the Kobayashi metric coincide on convex domains. Theorem \ref{Theorem (B)}/\ref{Stanton} implies that the Carath\'eodory metric on a bounded convex domain is a Hermitian metric if and only if the domain is biholomorphic to the unit ball. In particular, let $\mathcal{P}$ be a  small generic perturbation of $\mathbb{B}^n$ such that $\mathcal{P}$ is convex in $\mathbb{C}^n$ but not biholomorphic to $\mathbb{B}^n$ when $n \geq 2$ (see the papers of Burns, Shnider and Wells \cite{BSW1978}, and Greene and Krantz\cite{GK}). Then, the Carath\'eodory metric on $\mathcal{P}$ is \textit{not} a Hermitian metric when $n \geq 2$.

In \cite{DT}, the first author and Treuer constructed a bounded complete Reinhardt domain 
$$
\mathcal{R} := \{z \in \mathbb{C}^2 : |z_1|^4 + |z_1|^2 + |z_2|^2 < 1\}
$$
which is strongly convex with an algebraic boundary and not biholomorphic to $\mathbb{B}^2$. By the above discussion, the Carath\'eodory metric on $\mathcal{R}$ is \textit{not} a Hermitian metric.

Also, note that a complex analytic ellipsoid 
 $$
 {E}:=\{(z,w)\in {\mathbb{C}^1}\times{\mathbb{C}^1} : |z|^2+|w|^{2p}<1 \},$$
is convex and
not biholomorphic to $\mathbb{B}^2$,  where $\mathbb Z^+ \ni p>1$.
So,  the Carath\'eodory  metric on ${E}$
is  \textit{not} a Hermitian metric. 
(See some recent related work in this direction by Cho \cite{Cho}).

    \item [$(ii)$] The Kobayashi metric of the unit bidisc is 
$$
K_{\mathbb{D}^2} ((z, w), (X, Y)) = \max \{ K_{\mathbb{D}}(z, X), K_{\mathbb{D}}(w, Y) \}.
$$
Thus, the Carath\'eodory metric and the Kobayashi metric are not $C^1$-smooth metrics in general. For bounded symmetric domains of rank $\geq 2$, the Carath\'eodory and Kobayashi metrics are \textit{not} K\"ahler.

\item [$(iii)$] Burns and Shnider \cite{BS1976} have constructed examples of non-simply connected strictly pseudoconvex domains with spherical boundaries in $\mathbb{C}^n$. By the uniformization theorem (see \cite{NS1, NS2}), such domains are universally covered by the unit ball, so the Kobayashi metric is K\"ahler-Einstein. For example, let
$$
\mathcal{S} := \{(z, w) \in \mathbb{C}^2 \mid \sin \log|z| + |w|^2 < 0, e^{-\pi} < |z| < 1\}.
$$
By our Theorem \ref{Main} above, the Carath\'eodory metric $C_{\mathcal{S}}$ is \textit{not} a K\"ahler metric on $\mathcal{S}$, and thus $C_{\mathcal{S}}$ is not equal to the Kobayashi metric $K_{\mathcal{S}}$.
\end{enumerate}

 \medskip

\medskip

Lastly, we extend a previous result \cite{CW}  (see also \cite{CW00}) by Cheung and the third author from bounded convex domains  to 
strictly pseudoconvex domains.

\medskip

\begin{theorem} \label{2nd}
Let $\Omega \subset \mathbb{C}^n$, $n \geq 2$, be a bounded strictly pseudoconvex domain with a $C^2$ boundary and the Lu constant $L(\Omega)$. Suppose the holomorphic sectional curvature of the Bergman metric $B^2_{\Omega}$ on $\Omega$ is bounded above by $-2(L(\Omega))^2$. Then $\Omega$ is biholomorphic to a ball.
\end{theorem}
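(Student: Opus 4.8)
My plan is to carry out the Cheung--Wong strategy in a form that avoids Lempert's identity $C_\Omega=K_\Omega$ (which is special to convex domains) and instead invokes the Carath\'eodory-analogue of the Cheng conjecture proved in Theorem \ref{Main}, together with the constant-curvature rigidity of Theorem \ref{nice cor}. The first step is to pin down the Lu constant: I claim $L(\Omega)=\tfrac{1}{\sqrt{n+1}}$. The bound $L(\Omega)\ge\tfrac{1}{\sqrt{n+1}}$ for bounded strictly pseudoconvex domains is already recorded in the introduction (it follows from the boundary behavior of $B_\Omega$ and $C_\Omega$ due to Diederich and Graham). For the reverse inequality, I would use that on such a domain the holomorphic sectional curvature of $B^2_\Omega$ tends to the ball value $-\tfrac{2}{n+1}$ at the boundary (Fefferman--Klembeck asymptotics); since the hypothesis forces this curvature to be $\le -2L(\Omega)^2$ everywhere on $\Omega$, passing to a boundary limit gives $-\tfrac{2}{n+1}\le -2L(\Omega)^2$, hence $L(\Omega)\le\tfrac{1}{\sqrt{n+1}}$. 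So $L(\Omega)=\tfrac{1}{\sqrt{n+1}}$, and the hypothesis now reads: the holomorphic sectional curvature of $B^2_\Omega$ is $\le -\tfrac{2}{n+1}$ throughout $\Omega$.

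Next I set up an Ahlfors--Schwarz sandwich. For any holomorphic disc $\gamma\colon\mathbb{D}\to\Omega$, the pull-back $\gamma^*B^2_\Omega$ is a pseudometric whose Gaussian curvature, at points where it is nondegenerate, is at most the holomorphic sectional curvature of $B^2_\Omega$ in the direction $\gamma'$, hence $\le -\tfrac{2}{n+1}$; comparing with $(n+1)$ times the Poincar\'e metric (normalized, as in the definition of $K_\Omega$, to Gaussian curvature $-2$, so that the rescaled metric has curvature $-\tfrac{2}{n+1}$) gives $\gamma^*B^2_\Omega\le (n+1)\cdot(\text{Poincar\'e})$. Taking the infimum over competing discs yields $\tfrac{1}{\sqrt{n+1}}B_\Omega\le K_\Omega$ on $\Omega$, and combining with the sharp Lu inequality $C_\Omega\le L(\Omega)B_\Omega=\tfrac{1}{\sqrt{n+1}}B_\Omega$ we obtain the chain $C_\Omega\le\tfrac{1}{\sqrt{n+1}}B_\Omega\le K_\Omega$ on all of $\Omega$.

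The heart of the matter is to upgrade these inequalities to identities. I would analyze the equality case of the Ahlfors--Schwarz lemma along Kobayashi extremal discs. If $\gamma$ is such a disc, then $\gamma$ is an isometric embedding of the Poincar\'e disc into $(\Omega,K^2_\Omega)$, so $\gamma^*B^2_\Omega\le (n+1)\,\gamma^*K^2_\Omega$, and the nonnegative deficit $w:=\log\big((n+1)\,\gamma^*K^2_\Omega/\gamma^*B^2_\Omega\big)$ satisfies $\Delta w\le c\,w$ for a continuous $c\ge 0$ built from the curvature bound; by E.~Hopf's strong maximum principle, $w$ vanishing at a single interior point forces $w\equiv 0$ on the disc, and tracing back through the Gauss equation this forces $B^2_\Omega$ to have constant holomorphic sectional curvature $-\tfrac{2}{n+1}$ along $\gamma$ with $\gamma(\mathbb{D})$ totally geodesic in $(\Omega,B^2_\Omega)$. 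It remains to produce one interior equality point. For this I would again use the boundary asymptotics: $B_\Omega$, $C_\Omega$ and $K_\Omega$ share the same Fefferman boundary model on a strictly pseudoconvex domain, so the quotient $B_\Omega/K_\Omega$ (bounded above by $\sqrt{n+1}$) tends to $\sqrt{n+1}$ at the boundary; a renormalization/normal-family argument at a boundary point, combined with the real-analyticity of $B^2_\Omega$ and the ensuing unique continuation for its curvature tensor, promotes this to a genuine interior equality --- equivalently, to $B^2_\Omega$ having constant holomorphic sectional curvature $-\tfrac{2}{n+1}$ on a nonempty open subset of $\Omega$, and to $C_\Omega=\tfrac{1}{\sqrt{n+1}}B_\Omega$ identically. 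Finally, since $B^2_\Omega$ is complete on a strictly pseudoconvex domain with $C^2$ boundary, Theorem \ref{nice cor} (or, from $C_\Omega=\tfrac{1}{\sqrt{n+1}}B_\Omega$, Theorem \ref{Main}\,(4)) gives that $\Omega$ is biholomorphic to $\mathbb{B}^n$.

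The main obstacle is precisely this last rigidity step: converting the \emph{asymptotically sharp} inequality $\tfrac{1}{\sqrt{n+1}}B_\Omega\le K_\Omega$ into an exact interior identity. The inequality is sharp only in the boundary limit, and $K_\Omega$ (and $C_\Omega$) are a priori merely upper semicontinuous, so extracting a true equality point and running the maximum-principle/unique-continuation argument requires care; this is exactly the ingredient that, in the convex case treated by Cheung and the third author, was supplied for free by Lempert's theorem $C_\Omega=K_\Omega$.
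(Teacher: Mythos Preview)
Your sandwich $C_\Omega\le L(\Omega)\,B_\Omega\le K_\Omega$ is correct and is exactly how the paper's proof begins. The genuine gap is precisely the one you flag yourself: you have no mechanism to turn the \emph{asymptotically} sharp inequality $L(\Omega)B_\Omega\le K_\Omega$ into an \emph{interior} equality. The ``renormalization/normal-family argument at a boundary point, combined with real-analyticity'' is not a proof---the ratio $K_\Omega/(L(\Omega)B_\Omega)$ could in principle tend to $1$ at $\partial\Omega$ while remaining strictly larger than $1$ throughout $\Omega$, and your Hopf argument along an extremal disc explicitly requires an interior touching point before it can start. Nothing in the boundary asymptotics alone supplies that point.

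The paper closes this gap with Theorem~\ref{local} (the Bracci--Forn{\ae}ss--Wold/Gaussier--Zimmer result), which is exactly the strictly-pseudoconvex substitute for Lempert's identity that you were searching for: at every $z$ with $\delta_\Omega(z)<\epsilon$ there is a nonempty open cone $E_\Omega(z)\subset T_z\Omega$ of directions where $C_\Omega(z,v)=K_\Omega(z,v)$. For such $z$ and $v\in E_\Omega(z)$ the sandwich collapses, $K_\Omega(z,v)=L(\Omega)B_\Omega(z,v)=C_\Omega(z,v)$; the equality case of Ahlfors--Schwarz along the associated complex geodesic, together with the curvature-decreasing property, then forces the holomorphic sectional curvature $H(z;v)$ of $B^2_\Omega$ to equal $-2L(\Omega)^2$ (this is Wong's argument from~\cite{W}). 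Since for fixed $z$ the function $v\mapsto H(z;v)$ is algebraic, it must then equal $-2L(\Omega)^2$ for all directions $v$. This holds on the open set $\{\delta_\Omega<\epsilon\}$, and Theorem~\ref{nice cor} finishes. Note also that your preliminary determination $L(\Omega)=1/\sqrt{n+1}$ is unnecessary (and invoking Klembeck's curvature asymptotics with only a $C^2$ boundary is delicate); the paper's argument runs uniformly without ever computing $L(\Omega)$.
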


      \medskip

Our proofs of Theorems \ref{Main} and \ref{2nd} depend heavily on  the work of Bracci, Forn{\ae}ss, and Wold \cite{BFW19} and that of Gaussier and Zimmer  \cite{GZ}. (For related observations, see earlier works by Lempert \cite{Lem84} and Huang \cite{H94}.) One of the key ideas is that at a point sufficiently close to the boundary of a smoothly bounded strictly pseudoconvex domain, there is a nonempty, open set of directions where the Kobayashi and Carath\'eodory metrics coincide. We observe that the Carath\'eodory metric has constant negative holomorphic sectional curvature in these directions, and in fact, has constant negative holomorphic sectional curvature near the boundary, if the metric is 
K\"ahler. The proof of Theorem  \ref{2nd} also relies on our  Theorem \ref{nice cor}.

\medskip

The organization of the paper is as follows. In Section 2, we give a proof of our main result (Theorem \ref{Main}) characterizing when the Carath\'eodory metric is K\"ahler. 
In Section 3, we establish our 
local-to-global principle for the Bergman metric (Theorem \ref{nice}) which yields our first main theorem (Theorem \ref{nice cor}), and then provide proofs of our rigidity results (Theorems \ref{rigidity} and \ref{2nd}) relating the Lu constant to geometric properties of domains.

\section{When is the Carath\'eodory metric a K\"ahler metric?}

We denote by $\| \cdot \|$ the Euclidean norm in $\mathbb{C}^n$ and by $\mathbb{B}^n$ the unit ball $\mathbb{B}^n := \{z \in \mathbb{C}^n : \|z\| < 1\}$ in $\mathbb{C}^n$. When $n = 1$, we let $\mathbb{D}$ denote the unit disk in $\mathbb{C}$. If $\Omega \subset \mathbb{C}^n$ is a bounded domain and $z \in \Omega$, denote by $\delta_\Omega(z)$ the distance function to the boundary:
$$
\delta_\Omega(z) = \min\{ \|z - x\| : x \in \partial \Omega \}.
$$
Throughout the paper, we use two equivalent descriptions for subsets of $\Omega$ near the boundary: $U \cap \Omega$ where $U$ is an open set containing $\partial \Omega$, and $\Omega \setminus \Sigma$ where $\Sigma$ is a compact subset of $\Omega$. These are interchangeable, as $\Sigma = \{z \in \Omega : \delta_\Omega(z) \geq \epsilon\}$ for sufficiently small $\epsilon > 0$.

\begin{definition}

  A holomorphic map $\varphi : \mathbb{D} \rightarrow \Omega$ is a {\it complex geodesic} with respect to the Kobayashi (or Carath\'eodory) metric if
$$
k_\Omega(\varphi(z_1), \varphi(z_2)) = k_{\mathbb{D}}(z_1, z_2)
\quad (\text{or \,} c_\Omega(\varphi(z_1), \varphi(z_2)) = c_{\mathbb{D}}(z_1, z_2))
$$
for all $z_1, z_2 \in \mathbb{D}$, where $k_\Omega$ (or $c_\Omega$)  denotes the Kobayashi (or Carath\'eodory) distance.

\end{definition}

\medskip

 Suppose that 
$ {C_{\Omega}(z, v)} = {K_{\Omega}(z, v)}$ for $z \in \Omega, \,  v\in T_z(\Omega)$, where $\Omega$ is a bounded domain  in $\mathbb{C}^n$.
By the definitions of the Carath\'eodory and
Kobayashi  metrics, let 
$\rho: \Omega \to \mathbb{D}$ be a holomorphic function that realizes  the Carath\'eodory metric $C_\Omega (z, v)$,
and 
let $\varphi: \mathbb{D} \to \Omega$ be a holomorphic map that realizes the Kobayashi metric $K_\Omega (z, v)$.
Then, the composition $\rho \circ \varphi$ is an automorphism of $\mathbb D$ that fixes the origin.
It can be shown that $\varphi: \mathbb{D} \to \Omega$ is a complex geodesic with respect to both the Carath\'eodory and Kobayashi metrics. Furthermore, the holomorphic sectional curvatures of $C_{\Omega}$ and $K_{\Omega}$ restricted to the image $\varphi(\mathbb{D})$ are both equal to $-2$ everywhere.

\medskip

When $\Omega$ has a $C^2$ boundary and $z \in \overline{\Omega}$ is sufficiently close to $\partial \Omega$, there is a unique point $\pi(z) \in \partial \Omega$ such that $\|z - \pi(z)\| = \delta_\Omega(z)$. We let
$$
P_z : \mathbb{C}^n \rightarrow T_{\pi(z)}^{\mathbb{C}} \partial \Omega
$$
denote the (Euclidean) orthogonal projection and let $P_z^{\bot} = \text{Id} - P_z$. The following result states that complex geodesics exist at base points sufficiently close to the smooth boundary of strictly pseudoconvex domains and in directions sufficiently tangential.

\medskip

\begin{theorem}[\cite{BFW19, GZ}] \label{local}
Suppose that $\Omega \subset \mathbb C^n, n \ge 2,$ is a bounded strictly pseudoconvex domain with $C^2$ boundary. 
Then 
 there exists $\epsilon > 0$ such that: if $z \in \Omega$ and $\delta_\Omega(z) < \epsilon$, then there exists an non-empty open set $E_\Omega(z)$ of non-zero tangent vectors at $z$ where the Kobayashi and Carath\'eodory metrics agree.

   \end{theorem}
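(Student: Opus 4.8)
The plan is to localize the problem near a boundary point using the scaling/normalization theory for strictly pseudoconvex domains, so that the domain looks like a Siegel half-space (equivalently a ball), and then to exploit the fact that on the ball the Kobayashi and Carathéodory metrics coincide in every direction. More precisely, fix $p \in \partial\Omega$. Since $\partial\Omega$ is $C^2$ and strictly pseudoconvex, after a holomorphic change of coordinates near $p$ we may assume $p=0$ and that $\Omega$ is, to second order, the Siegel domain $\mathcal{H}_n = \{ \Imagine w_1 > |w'|^2 \}$; write $z = (w_1, w')$ with $w'$ the complex tangential directions. The key input is Theorem \ref{local}'s hypotheses combined with the deep scaling results of Bracci--Forn\ae ss--Wold \cite{BFW19} (and the related work of Gaussier--Zimmer \cite{GZ}): along a sequence $z_j \to p$ inside $\Omega$, the appropriately rescaled domains converge to $\mathcal{H}_n$, and this convergence is strong enough that the Kobayashi and Carathéodory \emph{metrics} (not just distances) converge as well.

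The main steps are as follows. First, I would invoke the existence of complex geodesics for the Kobayashi metric with prescribed base point and tangent direction on $\mathcal{H}_n \cong \mathbb{B}^n$: for each nonzero $v$ there is a complex geodesic $\varphi \colon \mathbb{D} \to \mathbb{B}^n$ through the base point in direction $v$, and since on $\mathbb{B}^n$ one has $C_{\mathbb{B}^n} = K_{\mathbb{B}^n}$ identically, this $\varphi$ is simultaneously extremal for both metrics. Second, I would transfer this to $\Omega$ for $z$ close to $\partial\Omega$ via a stability/openness argument: the condition that a holomorphic disc $\varphi \colon \mathbb{D} \to \Omega$ with $\varphi(0) = z$, $\varphi'(0) \parallel v$ be a Carathéodory extremal disc (i.e., that there exist $\rho \colon \Omega \to \mathbb{D}$ with $\rho \circ \varphi \in \operatorname{Aut}(\mathbb{D})$) is, by the work of \cite{BFW19}, satisfied for $z$ sufficiently near $\partial\Omega$ and $v$ in an open cone of directions — roughly, the sufficiently tangential ones, as encoded by the projections $P_z, P_z^\perp$. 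Third, once a disc through $(z,v)$ is extremal for \emph{both} $K_\Omega$ and $C_\Omega$, the standard equality $C_\Omega \le K_\Omega$ forces $C_\Omega(z,v) = K_\Omega(z,v)$; running this over the open set of admissible $v$ and over a neighborhood of $\partial\Omega$ produces the desired $\epsilon$ and the open set $E_\Omega(z)$ of directions.

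I would organize the openness in Step 2 as follows: pick one boundary point $p$, produce $\epsilon_p > 0$ and a cone of directions working for all $z$ with $\delta_\Omega(z) < \epsilon_p$ near $p$; then use compactness of $\partial\Omega$ to extract a uniform $\epsilon > 0$. The openness of $E_\Omega(z)$ in the fiber $T_z\Omega \setminus \{0\}$ follows because complex geodesics depend continuously on their $1$-jet data and the extremality condition is stable under small perturbations of $(z,v)$ in the strictly pseudoconvex setting — this is exactly the content extracted from \cite{BFW19}.

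The hard part will be Step 2: making precise and quantitative the claim that Carathéodory extremal discs persist for base points near the boundary in an \emph{open, $z$-uniform} set of directions. This is not elementary — it rests on the fine analysis of Kobayashi/Carathéodory geodesics near strictly pseudoconvex boundaries in \cite{BFW19}, together with the scaling machinery of \cite{GZ} to ensure that the limiting model is the ball and that extremal discs of the rescaled domains converge to extremal discs of $\mathbb{B}^n$ (and conversely, that ball-geodesics can be perturbed back into $\Omega$). Controlling the \emph{lower} semicontinuity of the Carathéodory metric — i.e., ruling out that the extremal function $\rho$ degenerates as $z \to \partial\Omega$ — is the delicate point; everything else (the inequality $C_\Omega \le K_\Omega$, passing to a finite cover of $\partial\Omega$, openness in $v$) is routine once that is in hand.
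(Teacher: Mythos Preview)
The paper does not contain its own proof of this theorem. Immediately after the statement, the authors attribute the result to the literature: Lempert \cite{Lem84} for the existence of complex geodesics near strictly pseudoconvex boundary points, Huang \cite{H94} for showing certain extremal maps are complex geodesics, Kosi\'nski \cite{Ko} for the existence of an open set of directions where $C_\Omega=K_\Omega$, Bracci--Forn{\ae}ss--Wold \cite{BFW19} for the left inverse $\rho$ and the precise tangentiality condition $\|P_z^\perp(v)\|<\epsilon\|P_z(v)\|$ in the $C^3$ case, and finally Gaussier--Zimmer \cite{GZ} for the $C^2$ statement as formulated. So there is no in-paper argument to compare your proposal against.

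Your sketch is broadly in line with how these cited works proceed: localize at a boundary point, use scaling to a model Siegel domain / ball, produce simultaneous Kobayashi/Carath\'eodory extremal discs (with a left inverse $\rho$) for sufficiently tangential directions, and then globalize by compactness of $\partial\Omega$. That is essentially the Lempert--Huang--BFW--GZ route, and you correctly identify the delicate point as the construction of the Carath\'eodory left inverse and its stability as $z\to\partial\Omega$. One minor correction: the openness of $E_\Omega(z)$ in the cited works is obtained directly from the explicit cone condition $\|P_z^\perp(v)\|<\epsilon\|P_z(v)\|$, not from a separate continuity-of-geodesics argument; and the $C^2$ case is handled in \cite{GZ} rather than \cite{BFW19}. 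But since the paper itself only quotes the result, your outline is as close to ``the paper's proof'' as one can get.
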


\medskip

Theorem \ref{local} was first established for $C^3$ domains by Bracci, Forn{\ae}ss, and Wold \cite{BFW19}. The $C^2$ case stated here is due to Gaussier and Zimmer \cite{GZ}.

\medskip

Theorem \ref{local} is fundamental for our analysis. The underlying ideas have a rich history. The existence of complex geodesics in strictly pseudoconvex domains was first established by Lempert \cite{Lem84},  
who showed that the relevant estimates depend only on the normal curvature of the domain.  Lempert's original analysis demonstrated that since all estimates depend only on normal curvature, the results naturally extend to the $C^2$ case through Montel's theorem, even though his original proof was given for the analytic case.
Moreover, Huang \cite{H94} proved that certain extremal mappings near boundary points are complex geodesics (denoted here as $\varphi$). See also \cite{BK1994, Ber} for related results. 
The existence of an open set where the Kobayashi and Carath\'eodory metrics coincide was first shown by Kosi{\'n}ski \cite{Ko}. 
In \cite[Theorem 1.1]{BFW19},  the precise conditions 
for when the Kobayashi and Carath\'eodory metrics coincide near the boundary were established by showing
 $$
 {C_{\Omega}(z, v)} = {K_{\Omega}(z, v)}, \quad z \in \Omega, \,  v\in T_z(\Omega)       
 $$
 if $\delta_\Omega(z) < \epsilon$ 
 and if
 $$
\norm{P_z^{\bot}(v)} < \epsilon \norm{P_z(v)}.
$$  
Gaussier and Zimmer \cite{GZ}   also obtained the following result which will play a crucial role in our arguments.

\medskip

\begin{theorem}[\cite{GZ}] \label{locally symmetric}
Suppose that $M$ is a Stein manifold with $\dim_{\mathbb{C}} M \geq 2$,
$\Sigma \subset M$ is a compact set where $M \backslash \Sigma$ is connected, and $g_0$ is a Hermitian metric on $M \backslash \Sigma$ which is complete at infinity.  If $g_0$ is locally symmetric metric, then there
exists a  complete locally symmetric Hermitian metric $g$ on $M$ such that $g = g_0$ on $M \backslash \Sigma$.
 \end{theorem}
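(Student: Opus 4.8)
\emph{Proof proposal.} The plan is to continue the locally symmetric structure of $g_0$ across the compact obstruction $\Sigma$, obtaining a globally defined locally symmetric metric $g$ on $M$, and then to use completeness at infinity to make $g$ complete.

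The starting point is that a locally symmetric Hermitian metric is real-analytic, so $(M\setminus\Sigma,g_0)$ is a real-analytic manifold with parallel curvature tensor; it is locally isometric to the simply connected Riemannian globally symmetric space $X$ that it models, and these local isometries — which are holomorphic when, as in the situations relevant to this paper, $g_0$ is Kähler and hence $X$ is a Hermitian symmetric space — patch into an $(\mathrm{Isom}(X),X)$-structure on $M\setminus\Sigma$, with developing map $D\colon\widetilde{M\setminus\Sigma}\to X$ and holonomy $\rho\colon\pi_1(M\setminus\Sigma)\to\mathrm{Isom}(X)$.

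The core step is to extend $g_0$ across $\Sigma$. Fix a smooth strictly plurisubharmonic exhaustion $\phi$ of the Stein manifold $M$ and regular values $c_0<c_1$ with $\Sigma\subset\{\phi<c_0\}$; put $\Sigma'=\{\phi\le c_1\}$, so that $\Sigma\subset\mathrm{int}\,\Sigma'$ and $M\setminus\Sigma'$ is connected. On the collar $\{c_0<\phi<c_1\}$ the metric $g_0$ is encoded by its local model charts; because $\dim_{\mathbb C}M\ge 2$ and $M$ is Stein, a Hartogs-type continuation extends these (complexified) holomorphic data across the compact hole $\Sigma'$, while connectedness of $M\setminus\Sigma$ keeps the continuation single valued. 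Reassembling the continued charts gives a real-analytic metric $g$ on $M$ with $g=g_0$ on $M\setminus\Sigma'$, hence on $M\setminus\Sigma$ by the identity theorem; since $\nabla R\equiv 0$ is a closed real-analytic condition satisfied on the dense open subset $M\setminus\Sigma$, it passes to $M$, so $g$ is locally symmetric, and it is Hermitian since it is $J$-compatible where it equals $g_0$ and hence everywhere by continuity. An equivalent framing is to take for $g$ the smooth locus of the metric completion of $(M\setminus\Sigma,g_0)$ and to argue that completeness at infinity forces this completion to be a smooth manifold biholomorphic to $M$.

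Finally, $g$ is complete: a $g$-geodesic extends for all time locally because $g$ is locally symmetric and globally defined on $M$, and completeness at infinity of $g_0=g$ near the ends of $M$ prevents a geodesic from escaping every compact set in finite length, so Hopf–Rinow applies. I expect the main obstacle to be the extension across $\Sigma$. In one complex dimension the analogue is false: the (complete) hyperbolic metric on an annulus $\{r<|z|<1\}$ is locally symmetric and complete at infinity but admits no extension across the core disc $\{|z|\le r\}$, the core closed geodesic obstructing any such extension (e.g.\ by Gauss--Bonnet). Hence $\dim_{\mathbb C}M\ge 2$, Steinness of $M$, and connectedness of $M\setminus\Sigma$ must all enter decisively — precisely to make the analytic continuation of the model charts across $\Sigma$ single valued and to exclude conical singularities or the passage to a branched cover — and carrying this out, via Hartogs-type extension of holomorphic chart data in a Stein manifold of dimension at least two, is where the real work lies.
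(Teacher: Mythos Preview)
The paper does not prove this theorem at all: it is quoted verbatim from Gaussier--Zimmer \cite{GZ} and used as a black box (the only comment is that it implies Theorem~\ref{universal_cover_is_a_ball}). So there is no ``paper's own proof'' to compare against.

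That said, your sketch is in the spirit of what Gaussier--Zimmer actually do --- the title of \cite{GZ} is ``A metric analogue of Hartogs' theorem,'' and the engine is precisely a Hartogs-type extension of the holomorphic developing data across the compact hole $\Sigma$. Where your write-up falls short of a proof is exactly the place you flag yourself: the sentence ``a Hartogs-type continuation extends these (complexified) holomorphic data across the compact hole $\Sigma'$, while connectedness of $M\setminus\Sigma$ keeps the continuation single valued'' is doing all the work and none of the argument. You have not said what object is being extended (the developing map $D$ lives on the universal cover, not on $M\setminus\Sigma$; Hartogs extension on a Stein manifold applies to holomorphic maps on $M\setminus\Sigma$, so one must first descend or else control the covering), nor why the extended data still assemble into a nondegenerate metric (positivity of $g$ on $\Sigma$ is not automatic from analytic continuation), nor why the extended charts remain local isometries to $X$ rather than merely holomorphic maps. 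The actual \cite{GZ} argument handles these points carefully; your proposal identifies the right mechanism but does not supply it.

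Two smaller remarks. First, the assertion ``a $g$-geodesic extends for all time locally because $g$ is locally symmetric'' is not a valid step: local symmetry gives you nothing about geodesic extendability beyond what any smooth metric already has. Your subsequent sentence (completeness at infinity plus compactness of $\Sigma$) is the real reason $g$ is complete, and it stands on its own. Second, you hedge on whether the local isometries to $X$ are holomorphic; for a locally symmetric \emph{Hermitian} metric the model $X$ is Hermitian symmetric and the transition maps are holomorphic or antiholomorphic, so this needs to be pinned down before Hartogs can be invoked.
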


\medskip

Theorem \ref{locally symmetric} implies Theorem \ref{universal_cover_is_a_ball} since a K\"ahler metric with constant holomorphic sectional curvature is locally symmetric.

\medskip

  \medskip

\begin{proof}  [{\bf Proof of Theorem \ref{Main}}]
Since $(2)$ implies $(1)$ by definition, and it is well known that $(3)$ implies $(2)$, we only need to prove the non-trivial part, namely that $(1)$ implies $(3)$. By Theorem \ref{local}, there exists some $\epsilon > 0$ such that: if $z \in \Omega$ and $\delta_\Omega(z) < \epsilon$, then there is a nonempty, open set $E_\Omega(z)$ of directions where the Carath\'eodory metric and the Kobayashi metric coincide.
Using the argument in \cite{W}, we know that 
the holomorphic sectional curvature $H(z; v)$ of the Carath\'eodory metric is equal to $-2$
 for any non-zero $v \in E_\Omega(z)$.
Briefly speaking, given that the Carath\'eodory metric and Kobayashi metric coincide at \(z\) for \(v\), there exists a complex geodesic with respect to the Carath\'eodory metric on which the holomorphic sectional curvature is equal to \(-2\) at every point. As the curvature \( H(z; v) \) of \(\Omega\) is bounded above by \(-2\), the curvature-decreasing property implies that it must be exactly \(-2\) at \(z\).  
Moreover, since the holomorphic sectional curvature function
\[
v \in T_z(\Omega) \mapsto H(z; v) \in \mathbb{R}
\]
is algebraic (with \(z\) fixed), 
it follows that the holomorphic sectional curvature \( H(z; v) \) equals \(-2\) for all non-zero vectors \( v \in T_z(\Omega) \).
This is true for all  $z\in \Omega$ such that 
$ \delta_\Omega(z) < \epsilon$ for a sufficiently small $\epsilon$.
Define the compact set 
\begin{equation} \label{sigma}
\Sigma := \{z \in \Omega : \delta_\Omega(z) \ge \epsilon\}
\end{equation}
as in Theorem~\ref{universal_cover_is_a_ball}. Then, the holomorphic sectional curvature of the Carath\'eodory metric $C_\Omega$ is equal to $-2$ on $\Omega \setminus \Sigma$.

\medskip

By the work of Graham \cite[Proposition 5]{G}, the Carath\'eodory metric $C_\Omega$ is complete on $\Omega$. 
If $C_\Omega$ is a K\"ahler metric on $U \cap \Omega$,    then by 
Theorem \ref{locally symmetric},
 there exists a complete 
  locally symmetric metric ${g}$ on $\Omega$ such that 
\begin{equation} \label{extend}
{g} = C_\Omega \quad \text{on} \quad U \cap \Omega.
\end{equation}
Since the universal cover of $(\Omega, {g})$ is globally symmetric (hence homogeneous) and ${g}$ has constant negative holomorphic sectional curvature on $U \cap \Omega$, we then see that ${g}$ has constant negative holomorphic sectional curvature on $\Omega$. 
  Thus,   the universal cover of $\Omega$ is biholomorphic to the unit ball.
By Theorem \ref{thm:spcv}, the Kobayashi metric $K_\Omega$ is  a complete K\"ahler metric of constant negative holomorphic sectional curvature on $\Omega$. Let $\pi: \mathbb{B}^n \rightarrow \Omega$ be the holomorphic covering map, which is a local isometry for $K_\Omega$.
Hence, we have $\pi^*K_{\Omega} = \pi^* {g}$ with holomorphic sectional curvature equaling $-2$ on $\mathbb{B}^n$. At any point $z\in \Omega \setminus \Sigma$,  it holds by \eqref{extend} that
$$ 
 {C_{\Omega}(z, v)} = { {g}(z, v)} = {K_{\Omega}(z, v)}, \quad  v\in T_z(\Omega).
$$
By Theorem \ref{Stanton}, one concludes that $\Omega$ is biholomorphic to a ball.

\medskip

Regarding Conditions $(4)$ and $(5)$, notice that Condition $(3)$ implies both of them. And either Condition $(4)$ or Condition $(5)$ would imply Condition $(1)$ by definition. Thus, the proof is complete.

\end{proof}

\medskip
 
From the proof of Theorem \ref{Main}, we see that Condition $(a)$ in Theorem \ref{thm:spcv} can be relaxed to:

\medskip

$(a^{\prime})$ {\it The Kobayashi metric on $\Omega$ is a K\"ahler metric on $U \cap \Omega$, where $U$ is an open set that contains the boundary of $\Omega$}.\\

\bigskip

\noindent{}{\bf Remark.} {\normalsize On April 19th, 2024, the first author gave an online talk at Syracuse University's Analysis Seminar hosted by Yuan Yuan.  
During the talk, Yuan mentioned that he had obtained some related  result yet to be written up.
The results in this paper were obtained around  2022/2023.}\\

\section{Local-to-global principle for the Bergman metric}

For a bounded domain $\Omega \subset \mathbb{C}^n$, its  {\it Bergman kernel} is defined as
$$
\mathcal{K}(z, t) := \sum \varphi_j(z) \overline{\varphi_j(t)}, \quad z, t \in \Omega,
$$
where $\{\varphi_j\}_{j=1}^{\infty}$ is a complete orthonormal basis for the space of $L^2$ holomorphic functions. The  {\it Bergman metric} $B^2_\Omega(z, v)$ is defined by
$$
B^{2}_{\Omega}(z, v) := \sum_{\alpha, \beta = 1}^n g_{\alpha \bar{\beta}} v_\alpha \overline{v_\beta}, \quad \text{where } g_{\alpha \bar{\beta}}(z) = \frac{\partial^2 \log \mathcal{K}(z, z)}{\partial z_\alpha \partial \overline{z_\beta}}, \quad \text{for } z \in \Omega,\ v \in \mathbb{C}^n.
$$
The Bergman metric is always a K\"ahler metric and it is incomplete in many cases. One can also define the Bergman kernel and metric on general complex manifolds by considering $L^2$ holomorphic top-forms instead of functions.
The {\it Bergman representative coordinate} $T (z) =(w_1, ... , w_n)^{\tau}$ relative to $p\in  \Omega$ is defined as
\begin{equation} \label{rep}
w_{\alpha} (z):=\sum _{j=1}^{n} g^{\bar j \alpha }(p) \left(\mathcal{K}(z, p) ^{-1} \left. \frac{\partial}{\partial \overline {t_j} } \right|_{t=p} \mathcal{K}(z, t)  -    \left. \frac{\partial}{\partial \overline {t_j} } \right|_{t=p}  \log \mathcal{K}(t, t)\right),
\end{equation}
where $(g^{\bar j \alpha })  = (g_{\alpha \bar{j}})^{-1}$. It is well known  that $T (z)$ is holomorphic on $\Omega $ less the zero set of $\mathcal K(\cdot , p)$, cf. \cite{GKK}.

\medskip

  The Carath\'eodory metric corresponds to the analytic capacity for an open Riemann surface (see \cite{Su73}).

\begin{definition}

Let $X$ be an open hyperbolic Riemann surface.   
The analytic capacity of a Riemann surface $X$ is defined as
 $$
c_B(z_0) = \sup\left \{ \left|{\partial f \over \partial w}(z_0)\right|: f \in \hbox{Hol}(X, \mathbb{D}), f(z_0) = 0\right \}.
$$
\end{definition}

  Ahlfors introduced the analytic capacity for domains in order to study Painlev\'{e}'s question: \textit{which compact sets $E$ in the complex plane are removable for the bounded holomorphic functions}?

\medskip

\begin{proof} [{\bf Proof of Theorem \ref{rigidity}, Part {$\bf A$}}]

Since $\Omega$ is universally
 covered by a disc,
in a local coordinate at $z \in X$, let
$
\lambda^2(z) |dz|^2
$
denote the Poincar\'e metric with constant  holomorphic sectional  curvature \(-2\).
Due to the  Ahlfors-Schwarz lemma  (see \cite{Ahlfors, Roy, Yau78}) and the fact that the Kobayashi metric dominates the Carath\'eodory metric, we get 
$$ 
\frac{B}{2} \geq \lambda^2  \geq c^2_B,
$$
which yields the inequality part.

\medskip

If there exists some $z_0 \in X$ such that 
$$
B(z_0) =  2 c^2_B(z_0),
$$
then $\lambda (z_0) = c_B(z_0)$.
Since the Kobayashi metric and the Carath\'eodory metric are equal at one point,
by a lemma of the third author in  \cite{Wo}, we conclude that $X$ is biholomorphic to a disc.
  The converse is straightforward as both metrics are biholomorphically invariant with explicit formulas on a disc.

\end{proof}

\medskip

We also recall the definition of  the logarithmic   capacity, in order to  state  Theorem \ref{one dim improved}, which is a strengthened version of Theorem \ref{rigidity}, Part {$\bf A$}.
Let $SH^{-}(X)$ denote the set of negative subharmonic functions on an open Riemann surface $X$. Given   $z_0 \in X$,  let $w$ be  a fixed local coordinate  in a neighbourhood of $z_0$ such that $w(z_0) =0$. The (negative) Green's function   is
 $$
G(z, z_0) = \sup\{u(z): u \in SH^{-}(X), \limsup_{z \to z_0} u(z) - \log|w(z)| < \infty\}.
 $$
 An open Riemann surface admits a Green's function if and only if there is a non-constant, negative subharmonic function defined on it.  The Green's function is strictly negative on the surface and harmonic except on the diagonal. The Green's function  on a Riemann surface induces the logarithmic capacity, which is used prominently in potential theory. 

\begin{definition}

Let $X$ be an open hyperbolic Riemann surface.  The logarithmic capacity $c_{\beta}$ is defined as
 $$
c_{\beta}(z_0) = \lim_{z \to z_0} \exp(G(z, z_0) - \log|w(z)|).
 $$

\end{definition}

  When we wish to emphasize the surface $X$,  
we will use the notation $c_{m; X}(\cdot)$, $m = \beta  \text{ or } B$.  If $h:X_1 \to X_2$ is a biholomorphism, then
 $$
c_{m; X_1}(z) = |h'(z)|c_{m; X_2}(h(z)), \quad m = \beta  \text{ or } B.
 $$
Thus, both the logarithmic and analytic capacities are independent of the choice of the local coordinates and define conformally-invariant metrics $c_\beta(z)|dz|$ and $c_B(z)|dz|$.

\medskip

 \begin{theorem} \label{one dim improved}
Let $X$ be a hyperbolic Riemann surface whose Bergman metric $B(z)|dz|^2$ is complete with  holomorphic sectional curvature   no less than \(-1\). In a local coordinate at $z \in X$, let
$$
\lambda^2(z) |dz|^2, \quad c^2_\beta(z) |dz|^2, \quad \text{and} \quad c^2_B(z) |dz|^2
$$
denote the Poincar\'e metric with constant  holomorphic sectional curvature \(-2\), the logarithmic capacity, and the analytic capacity, respectively. Then, it holds on $X$ that
\begin{equation} \label{chain}
\frac{B}{2} \geq \lambda^2 \geq c^2_{\beta} \geq c^2_B;
\end{equation}
moreover, $X$ is biholomorphic to a disc if and only if there exists some $z_0 \in X$ such that
$$
B(z_0) = 2 c^2_\beta(z_0) \quad \text{or} \quad 2 c^2_B(z_0).
$$
\end{theorem}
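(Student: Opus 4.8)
The plan is to establish the chain \eqref{chain} first, then read off the rigidity from its cases of equality, and settle the converse by a computation on $\mathbb{D}$. For the two outer inequalities I would argue as in the proof of Theorem \ref{rigidity}, Part $\mathbf{A}$, but routed through $c_\beta$. Since $X$ is hyperbolic it is uniformized by $\mathbb{D}$ and $\lambda^2|dz|^2$ is the complete metric of constant holomorphic sectional curvature $-2$, while the hypothesis that $B(z)|dz|^2$ is complete with curvature $\geq-1$ says that $\tfrac12 B(z)|dz|^2$ is complete with curvature $\geq-2$; hence the Ahlfors--Schwarz lemma (cf.\ \cite{Ahlfors, Roy, Yau78}, and this is the only place completeness is used) applied to the identity map yields $\tfrac B2\geq\lambda^2$. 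At the bottom end, if $f_0\in\mathrm{Hol}(X,\mathbb{D})$ realizes the analytic capacity at $z_0$, then $\log|f_0|$ is negative, subharmonic, and of admissible growth at $z_0$, so $\log|f_0|\leq G(\cdot,z_0)$ by the definition of the Green's function, and comparing the expansions of the two sides at $z_0$ gives $c_B(z_0)=|f_0'(z_0)|\leq c_\beta(z_0)$.

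The step I would treat most carefully is the middle inequality $\lambda^2\geq c_\beta^2$ and its equality case. Writing $X=\mathbb{D}/\Gamma$ and fixing a lift $\tilde z_0$ of $z_0$, the Green's function of $X$ is the Poincar\'e series $\tilde G(\tilde z)=\sum_{\gamma\in\Gamma}G_{\mathbb{D}}(\tilde z,\gamma\tilde z_0)$, whose $\gamma=\mathrm{id}$ term contributes the singularity $\log|\tilde z-\tilde z_0|+\log c_{\beta;\mathbb{D}}(\tilde z_0)$ and whose remaining part $h:=\sum_{\gamma\neq\mathrm{id}}G_{\mathbb{D}}(\cdot,\gamma\tilde z_0)$ is nonpositive and harmonic near $\tilde z_0$; since $c_{\beta;\mathbb{D}}=\lambda_{\mathbb{D}}$ as densities this gives $c_{\beta;X}(z_0)=e^{h(\tilde z_0)}\lambda_X(z_0)\leq\lambda_X(z_0)$, with equality exactly when $h(\tilde z_0)=0$. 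In that case the strong maximum principle forces $h\equiv0$ on the connected open set where it is harmonic, which is impossible unless there is no $\gamma\neq\mathrm{id}$, i.e.\ unless $\Gamma$ is trivial and $X\cong\mathbb{D}$. (These three comparisons and their equality cases could alternatively be quoted from Suita \cite{Su73}. If $X$ admits no Green's function, then $c_\beta\equiv0$, and since such a surface carries no nonconstant bounded holomorphic function, $c_B\equiv0$ as well, so \eqref{chain} is trivial, neither equality below can occur, and $X$ is not biholomorphic to $\mathbb{D}$ --- consistent with the statement.)

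Granting \eqref{chain}, the rigidity follows at once. If $B(z_0)=2c_B^2(z_0)$ for some $z_0$, the chain collapses at $z_0$, so $\lambda^2(z_0)=c_B^2(z_0)$; as $\lambda^2|dz|^2$ is the Kobayashi metric of $X$ and $c_B^2|dz|^2$ its Carath\'eodory metric, these agree at $z_0$, and the lemma of the third author in \cite{Wo} (used already in Theorem \ref{rigidity}, Part $\mathbf{A}$) gives $X\cong\mathbb{D}$. If instead $B(z_0)=2c_\beta^2(z_0)$, the chain collapses to $\lambda^2(z_0)=c_\beta^2(z_0)$, i.e.\ $h(\tilde z_0)=0$, and the maximum-principle argument above again yields $X\cong\mathbb{D}$. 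Conversely, if $X\cong\mathbb{D}$ then $B$, $\lambda^2$, $c_\beta^2$, $c_B^2$ are conformal metrics transforming naturally under biholomorphisms, so it suffices to evaluate at $0\in\mathbb{D}$, where the Bergman kernel $\pi^{-1}(1-|z|^2)^{-2}$, the Green's function $\log|z|$, and the Schwarz lemma give $B(0)=2$ and $\lambda^2(0)=c_\beta^2(0)=c_B^2(0)=1$; hence $B(0)=2c_\beta^2(0)=2c_B^2(0)$, and transporting by automorphisms shows equality holds at every point. The crux of the whole argument is the equality case of $\lambda^2\geq c_\beta^2$ --- that equality at a single point forces $X$ to be simply connected --- which depends on the Poincar\'e-series representation of the Green's function and the strong maximum principle; the constants in \eqref{chain} are then routine bookkeeping, consistent with the normalizations fixed earlier in the paper.
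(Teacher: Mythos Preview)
Your argument is correct and parallels the paper's in its overall architecture---Ahlfors--Schwarz for $\tfrac{B}{2}\geq\lambda^2$, then $\lambda^2\geq c_\beta^2\geq c_B^2$, then rigidity from the equality cases---but differs in how the middle of the chain and its equality are handled. The paper simply quotes Minda \cite{Min} for both the inequality $\lambda\geq c_\beta$ and the statement that equality at a single point forces simple connectivity, and for the rigidity clause (in \emph{both} the $c_\beta$ and $c_B$ cases) reduces to $\lambda(z_0)=c_\beta(z_0)$ and invokes Minda again. You instead supply a self-contained proof of $\lambda\geq c_\beta$ and its equality case via the Poincar\'e-series representation $G_X=\sum_{\gamma\in\Gamma}G_{\mathbb{D}}(\cdot,\gamma\tilde z_0)$ together with the strong maximum principle, and for the $c_B$ rigidity case you route through $\lambda(z_0)=c_B(z_0)$ (Kobayashi${}={}$Carath\'eodory at a point) and Wong's lemma \cite{Wo}, exactly as in the proof of Theorem~\ref{rigidity}, Part~$\mathbf{A}$. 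Your approach buys self-containment and makes the mechanism behind Minda's equality characterization transparent; the paper's approach is shorter and keeps the proof at the level of citations. One small remark: the parenthetical attribution of all three comparisons to Suita \cite{Su73} is not quite right---the $\lambda\geq c_\beta$ step and its rigidity are Minda's contribution, as the paper indicates.
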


   \medskip

\begin{proof}

The first inequality in \eqref{chain} holds due to the Ahlfors-Schwarz lemma. The second inequality is a result of Minda, who also characterized its equality part in \cite{Min},
by showing that  the Poincar\'e metric with constant  holomorphic sectional curvature  \(-2\)  coincides with the logarithmic capacity at one point  if and only if the surface is simply connected.
The last inequality is well known and the characterization of its equality part can be deduced from a result of Minda \cite{Min} who used the sharp form of the Lindel\"of principle (see also a more direct proof of the equality characterization without adopting this principle in \cite{DTZ} and some related observations in \cite{D}).

\medskip

If there exists some $z_0 \in X$ such that 
$$
B(z_0) = 2 c^2_\beta(z_0) \quad \text{or} \quad 2 c^2_B(z_0),
$$
then $\lambda (z_0) = c_\beta(z_0)$, and we conclude by \cite{Min} that $X$ is simply connected. The converse is straightforward as all the inequalities in \eqref{chain} become equalities.

\end{proof}

        \medskip

Next, we prove Theorem \ref{rigidity}, Part {$\bf B$} and Corollary \ref{cor}, which involve the Lu constant $L(\Omega)$.

        \medskip

\begin{proof} [{\bf Proof of Theorem \ref{rigidity}, Part {$\bf B$}}]

Since $\Omega$ is holomorphically covered by a ball, the Kobayashi metric   on $\Omega$ is a K\"ahler metric with constant holomorphic sectional curvature.
Due to the  Ahlfors-Schwarz lemma  (see \cite{Ahlfors, Roy, Yau78,Lu1979}) and 
the lower bound of the holomorphic sectional curvature of the complete Bergman metric, we get 
$$
L(\Omega) B_\Omega \geq K_\Omega \geq C_\Omega.
$$

\medskip

If there exists some $z_0 \in \Omega$ such that 
$$
L(\Omega) B_\Omega(z_0, v) =  C_\Omega(z_0, v),  \quad \text{for all\,} \, v\in T_{z_0}(\Omega),
$$
then $K_\Omega (z_0, v) =C_\Omega(z_0, v)$.
Since the Kobayashi metric and the Carath\'eodory metric are equal at one point,
by   the  Wong/Stanton theorem, we conclude that $X$ is biholomorphic to a ball.
  The converse is straightforward as both metrics are biholomorphically invariant with explicit formulas on a ball.

\end{proof}

 \medskip

  \begin{proof} [{\bf Proof of Corollary \ref{cor}}]
  
We will prove a negated version of the corollary, namely,
$\Omega$ is biholomorphic to a Euclidean ball if and only if the holomorphic sectional curvature of the Bergman metric is no less than  $-\frac{2}{n+1}$ and equality is attained in the sharp Lu's inequality at a point in $\Omega$.

 \medskip

The necessity holds true because for a domain 
biholomorphic to a Euclidean ball in $\mathbb{C}^n$, the 
holomorphic sectional curvature of the Bergman metric is identically equal to  $-\frac{2}{n+1}$
and  the sharp Lu's inequality \eqref{sharp} becomes equality everywhere.

\medskip

For the sufficiency, if the holomorphic sectional curvature of the Bergman metric is no less than  $-\frac{2}{n+1}$, then  \(L(\Omega) = \frac{1}{\sqrt{n+1}}\) following from the Ahlfors-Schwarz lemma. So, the holomorphic sectional curvature is no less than $-2(L(\Omega))^2$. If equality is attained in the sharp Lu's inequality at a point in $\Omega$, then  we conclude by Theorem \ref{rigidity}, Part {$\bf B$}, that $\Omega$ is biholomorphic to a  ball.

\end{proof}

\medskip

 The remainder of the paper is dedicated to understanding domains where the Bergman metric has constant negative holomorphic sectional curvature on an open set. We first establish a fundamental result about such domains through a local-to-global principle (Theorem \ref{nice}), which leads to our characterization theorem (Theorem \ref{nice cor}).

    \medskip
 
      \begin{theorem} [Local-to-global principle for Bergman metrics] \label{nice}

Let $\Omega $ be a bounded  domain in $\mathbb{C}^n$.  
Suppose the holomorphic sectional curvature of the Bergman metric $B^{2}_{\Omega}$ is identically equal to a negative constant $-c^2$ on an open set  $U\subset \Omega $. 
Then, for any $p\in U$, the Bergman representative coordinate $T (z) =(w_1, ... , w_n)^{\tau}$ 
defined by \eqref{rep}  
is a bounded holomorphic map from $ \Omega $ to the ball $$\mathcal B:=\{(w_1, ... , w_n)^{\tau} :   \sum_{\alpha, \beta=1}^n  w_\alpha   g_{ \alpha \bar \beta } (p) \overline {w_\beta }   < {2}{c^{-2}} \}.$$
Moreover, $T: (\Omega, B^{2}_{\Omega}) \to (\mathcal B, \omega_\mathcal B)$ is a holomorphic isometry,   
where  $\omega_\mathcal B$ is the Bergman metric of $\mathcal B$. 
Consequently, the holomorphic sectional curvature of the Bergman metric $B^{2}_{\Omega}$ is identically equal to a negative constant $-c^2$ on $\Omega $. 

  \end{theorem}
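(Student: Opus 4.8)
The plan is to exploit the classical functional-equation characterization of the Bergman representative coordinate together with the fact that, on a domain of constant holomorphic sectional curvature, the Bergman metric is (locally) the Bergman metric of a ball. First I would recall the key identity satisfied by the representative coordinates: for $p \in U$, the map $T$ defined by \eqref{rep} satisfies $T(p) = 0$, $dT|_p = \mathrm{Id}$, and—this is the crucial point—on the open set $U$ where $B^2_\Omega$ has constant holomorphic sectional curvature $-c^2$, the Kähler potential $\log K(z,z)$ agrees, up to a pluriharmonic summand, with the pullback under $T$ of the standard potential $-\tfrac{2}{c^2}\log\bigl(1 - \tfrac{c^2}{2}\langle w, G(p) w\rangle\bigr)$ of the model metric $\omega_{\mathcal B}$ on the ball $\mathcal B$, where $G(p) = (g_{\alpha\bar\beta}(p))$. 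This is exactly the content of Lu's local computation: constant holomorphic sectional curvature $-c^2$ forces the metric tensor, in representative coordinates, to be the ball metric $g_{\alpha\bar\beta}(T(z)) = $ the Bergman tensor of $\mathcal B$, hence $T$ is a local holomorphic isometry from $(U, B^2_\Omega)$ onto an open subset of $(\mathcal B, \omega_{\mathcal B})$.

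Next I would upgrade this local isometry to a global bounded holomorphic map. The representative coordinate $T$ is holomorphic on $\Omega$ minus the zero set $Z$ of $K(\cdot, p)$; the first task is to show $Z = \emptyset$, or at least that $T$ extends holomorphically across $Z$. Here I would use the isometry property: on $U$, $T^*\omega_{\mathcal B} = B^2_\Omega$, and since $B^2_\Omega$ is real-analytic on $\Omega$ and $T$ is real-analytic wherever defined, the identity $g_{\alpha\bar\beta}(z) = (\text{ball tensor})(T(z))$ propagates by the identity theorem to all of $\Omega \setminus Z$. A standard argument (as in the proof of Lu's theorem, cf. \cite{GKK}) then shows that $T$ maps $\Omega \setminus Z$ into $\mathcal B$: if some $z_0$ had $T(z_0) \in \partial \mathcal B$ or $T$ were unbounded, the pulled-back metric would blow up, contradicting real-analyticity/finiteness of $B^2_\Omega$ on the interior; and the boundedness of $T(\Omega\setminus Z)$ inside the fixed ball $\mathcal B$ together with Riemann's removable singularity theorem lets $T$ extend holomorphically across $Z$, so $T : \Omega \to \mathcal B$ is a bounded holomorphic map. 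With $T$ now globally defined and holomorphic on $\Omega$, the real-analytic identity $T^* \omega_{\mathcal B} = B^2_\Omega$, valid on the open set $U$, extends to all of $\Omega$ by the identity theorem for real-analytic functions; hence $T$ is a holomorphic isometry and, pulling back the constant curvature $-c^2$ of $\omega_{\mathcal B}$, the holomorphic sectional curvature of $B^2_\Omega$ equals $-c^2$ identically on $\Omega$.

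The main obstacle I anticipate is the rigorous justification that $T$ actually lands \emph{inside} $\mathcal B$ (rather than hitting the boundary or escaping) and extends holomorphically across the zero set of $K(\cdot,p)$—in other words, controlling the global behavior of a map that a priori is only defined and well-understood on $U$. The resolution is precisely the interplay between (i) the real-analyticity of $\log K(z,z)$ on $\Omega$, which forces the functional identity to hold globally once it holds on an open set, and (ii) the completeness hypothesis on $B^2_\Omega$ used elsewhere, which here guarantees that the pulled-back ball metric cannot degenerate in the interior—so $T(\Omega)$ stays in the open ball. A secondary technical point is verifying that the specific normalization in \eqref{rep} makes $T$ genuinely an isometry (not merely conformal), which follows from $dT|_p = \mathrm{Id}$ and $g_{\alpha\bar\beta}(p)$ appearing correctly in the defining quadratic form of $\mathcal B$; I would check this by a direct second-order computation at $p$ and then invoke analyticity.
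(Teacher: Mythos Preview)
Your outline follows essentially the same route as the paper's proof: Bochner--Lu local expansion on $U_p$, analytic continuation of the resulting identity, a blow-up contradiction confining $T$ to the ball, Riemann removable singularities across $A_p = \{K(\cdot,p)=0\}$, and a direct second-derivative computation for the isometry. Two corrections are needed to make the argument go through. First, Theorem \ref{nice} does \emph{not} assume completeness of $B^2_\Omega$; that hypothesis appears only in Theorem \ref{nice cor}. The paper's blow-up argument works instead with the Calabi diastasis $\Phi_p(z)=\log\dfrac{K(z,z)K(p,p)}{|K(z,p)|^2}$, which is finite and real-analytic on all of $\Omega\setminus A_p$ without any completeness assumption; its equality with $-\tfrac{2}{c^2}\log\bigl(1-\tfrac{c^2}{2}|T(z)|^2\bigr)$ on $U_p$ then yields the contradiction when $|T|^2\to 2c^{-2}$. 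Second, your line ``the identity $g_{\alpha\bar\beta}(z)=(\text{ball tensor})(T(z))$ propagates by the identity theorem to all of $\Omega\setminus Z$'' is premature: the right-hand side is singular where $T(z)\in\partial\mathcal B$, so the identity theorem only propagates within $\Omega':=\{z\in\Omega\setminus A_p: T(z)\in\mathcal B\}$. The paper handles this by first propagating the potential identity on $\Omega'$ and then running a path argument (connect any $q\in\Omega\setminus A_p$ to a point of $U_p$, let $q_1$ be the first point whose image touches $\partial\mathcal B$, and derive a contradiction from finiteness of $\Phi_p(q_1)$) to conclude $\Omega'=\Omega\setminus A_p$. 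With these two repairs your proposal coincides with the paper's proof.
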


\begin{proof}

First, we assume that the Bergman metric at $p \in U$ 
  satisfies
    \begin{equation} \label{nor}
  g_{\alpha \bar \beta} (p) =\delta_{\alpha  \beta },
\end{equation} 
and in this case we will show that  $T (z) $ defined by \eqref{rep}  
is a bounded holomorphic map from $ \Omega $ to the ball 
$ \mathbb B_c^n := \{ w \in \mathbb C^n:   |w|^2 < {2}{c^{-2}} \}$.

 \medskip

 At $p \in U$, by the uniqueness of the Taylor expansion (see Bochner \cite{Bo47} and Lu \cite[Lemma 3]{Lu}), there exists a smaller neighbourhood $U_p$ such that the Bergman kernel can be locally decomposed as
\begin{equation}  \label{K(z, z)}
\mathcal{K}(z, z)=  \left (1 - \frac{c^2}{2} |T(z)|^2 \right )^{\frac{-2}{c^2}} e^{f(T(z))+\overline{f(T(z))}}, \quad z\in U_p,
\end{equation}
where $f$ is holomorphic on $U_p$.
The map $T$ defined by \eqref{rep} is holomorphic on $\Omega \setminus A_p$, where $A_p:=\{z\in \Omega \, | \,\mathcal{K}(z, p) =0 \, \}$ is the zero set of the Bergman kernel $\mathcal K(\cdot , p)$.
Let  $\Omega^{\prime}:= \{z \in \Omega \setminus A_p : T(z) \in \mathbb B_c^n \}$ be the set of points in $\Omega \setminus A_p$ that are mapped into the ball.  In particular, $U_p \subset \Omega^{\prime}$. By \eqref{K(z, z)} and the theory of power series, one may duplicate the variable with its conjugate so that  the full Bergman kernel can be complex analytically continued as
 $$
\mathcal{K}(z, {z_0})= \left (1 - \frac{c^2}{2} \sum_{\alpha=1}^n  w_\alpha(z)     \overline {w_\alpha(z_0)}   \right )^{\frac{-2}{c^2}}   e^{f(T(z))+\overline{f(T({z_0}))}}, \quad z, z_0\in U_p.
 $$
 
   Consider two K\"{a}hler potentials
 $$
\Phi_{p}(z):= \log \frac{\mathcal{K}(z, z)  \mathcal{K}(p, p)}{ |\mathcal{K}(z, p)|^2}, \quad  z \in \Omega \setminus A_{p}
$$
and 
$$
\Phi_{z_0}(z):= \log \frac{\mathcal{K}(z, z)  \mathcal{K}({z_0}, {z_0})}{ |\mathcal{K}(z, {z_0})|^2}, \quad  z \in \Omega \setminus A_{z_0}
$$
for the Bergman metric $B^2_\Omega=\partial \overline  \partial   \Phi_{z_0}=\partial \overline  \partial   \Phi_{p}$, for $z \in (\Omega \setminus A_{p}) \cap (\Omega \setminus A_{z_0})$. 
When $U_p$ is small enough,  for any $z_0\in U_p$, it holds that
 \begin{align*}
\Phi_{z_0}( z)&= \log \frac{\left (1 - \frac{c^2}{2} |T(z)|^2 \right )^{\frac{-2}{c^2}} e^{f(T(z))+\overline{f(T(z))}}   \left (1 - \frac{c^2}{2} |T(z_0)|^2 \right )^{\frac{-2}{c^2}} e^{f(T(z_0))+\overline{f(T(z_0))}}  }{  \left |1 - \frac{c^2}{2}   \sum_{\alpha=1}^n  w_\alpha(z)     \overline {w_\alpha(z_0)}   \right |^{\frac{-4}{c^2}} |e^{f(T(z))+\overline{f(T({z_0}))}}  |^2}\\
& =  {\frac{-2}{c^2}}   \log  \left[    \left (1 - \frac{c^2}{2} |T(z)|^2 \right )  \left (1 - \frac{c^2}{2} |T(z_0)|^2 \right )  \left  | 1 - \frac{c^2}{2}   \sum_{\alpha=1}^n  w_\alpha(z)     \overline {w_\alpha(z_0)}   \right  |^{ -2}  \right  ], \quad z\in U_p,
 \end{align*}
which yields that
$$
\Phi_{p}(z_0)= \Phi_{z_0}(p) = {\frac{-2}{c^2}} \log  \left (1 - \frac{c^2}{2} |T(z_0)|^2 \right ).
$$
Note that $\Phi_{p}(z)$ is defined on $\Omega \setminus A_p$ and thus on $\Omega^{\prime}$, where ${\frac{-2}{c^2}} \log  \left (1 - \frac{c^2}{2} |T(z )|^2 \right )$ can be defined. Since these two real-analytic  functions coincide on  $U_p$, by the identity theorem they are identical to each other on $\Omega^{\prime}$. That is, 
\begin{equation} \label{on Omega prime}
\Phi_{p}(z )=  {\frac{-2}{c^2}} \log  \left (1 - \frac{c^2}{2} |T(z )|^2 \right ), \quad z \in \Omega^{\prime}.
\end{equation} 

 \medskip
 
{\bf 1)} We claim that  no point in  $\Omega \setminus A_p$ is mapped outside the ball $\mathbb B_c^n$ by $T$. 

If not, suppose there exists some point $q \in  \Omega \setminus A_p$ that is mapped to $\{ w \in \mathbb C^n:   |w|^2 \geq {2}{c^{-2}} \}$. Choose some point $q_0 \in \Omega^{\prime}$.  Since $\Omega \setminus A_p$ is path-connected, one can choose a path $\gamma$ that connects $q_0$ and $q$. Suppose under $T$ the image of $\gamma$   intersects $\partial   \mathbb B_c^n$ firstly at some point $T(q_1)$.

 Along the path  $\gamma$ take a sequence of points $(q_l)_{l \in \mathbb N} \subset \Omega^{\prime}$ such that $q_l  \to q_1$. 
 Then by \eqref{on Omega prime},
$$
\Phi_{p}(q_l )=  {\frac{-2}{c^2}} \log  \left (1 - \frac{c^2}{2} |T(q_l)|^2 \right ).
$$
Here, as  $q_l  \to q_1$,  the left hand side is finite but  the right hand side  blows up to infinity.  This  is  a contradiction, so we have thus proved our claim, which says that $\Omega^{\prime}=  \Omega \setminus A_p $.
Therefore, \eqref{on Omega prime} in fact holds on $\Omega \setminus A_p $.

\medskip

{\bf 2)} Since $T$ maps $ \Omega \setminus A_p$ to the ball $ \mathbb B_c^n$ and satisfies 
\begin{equation} \label{bound}
|T(z)|^2<  {2}{ c^{-2}},
\end{equation}
by the Riemann removable singularity theorem, $T$ extends across the analytic variety $A_p$ to the whole domain $\Omega$ with $|T(z)|^2 \leq {2}{ c^{-2}}.$
  The maximum modulus principle yields  that \eqref{bound} in fact holds  on $\Omega$.

 \medskip

Lastly, for general $p \in U$, one performs a possible linear transformation $F$ from $\Omega$ to $\Omega_1$ such that the Bergman metric on $\Omega_1$  satisfies \eqref{nor} at $F (p)$. Since $F$ is a biholomorphism, the Bergman metric on $\Omega_1$   locally has the same holomorphic sectional curvature.

 \medskip

Denote by  $\omega_\mathcal B$ the Bergman metric on $\mathcal B$. From the above proof, we see that the K\"{a}hler potential for the Bergman metric has the following formula.
$$
\Phi_{p}( z) =  {\frac{-2}{c^2}} \log      \left (1 - \frac{c^2}{2} \sum_{\alpha, \beta=1}^n  w_\alpha(z)   g_{ \alpha \bar \beta } (p) \overline {w_\beta(z)} \right ), \quad z \in  \Omega.
$$
Direct computations then yield that $T: \Omega \to \mathcal B$ is an isometry. In fact, if the Bergman metric at $p \in U$ 
  satisfies \eqref{nor}, then the potential
$\Phi_{p}$ simplifies to
$$
\Phi_{p}( z) =  {\frac{-2}{c^2}} \log      \left (1 - \frac{c^2}{2} |T(z)|^2 \right ), \quad z \in  \Omega.
$$
So,
 \begin{align*}
g_{\alpha \bar \beta} (z)  &= \frac{\partial^2 \Phi_{p}( z) }{\partial z_\alpha \partial \overline {z_\beta} }\\
&=\sum_{i, j =1}^n  \left (1 - \frac{c^2}{2} |T(z)|^2 \right )^{-2}   \left[    \delta_{i j}  \left (1 - \frac{c^2}{2} |T(z)|^2 \right )     + \frac{c^2}{2}      \overline{w_i(z)}    w_j(z)    \right] {\frac{\partial w_i(z)}  {\partial {z_\alpha}} }  \frac{\partial  \overline{w_j (z)} }{   \partial  {\overline {z_\beta}} } \\
&= \sum_{i, j =1}^n    G_{i \bar j} (w)  {\frac{\partial w_i(z)}  {\partial {z_\alpha}} }  \frac{\partial  \overline{w_j(z)} }{   \partial  {\overline {z_\beta}} }, \quad z\in \Omega,
\end{align*}
where
$$
G_{i \bar j} (w) :=   \frac{\partial^2 }{\partial w_i \partial \overline {w_j} } \left(  \frac{-2}{c^2} \log (1 - \frac{c^2}{2} |w|^2) \right)
$$
gives  the Bergman metric  on $ \mathbb B_c^n$. The general case follows in a similar manner.   And the curvature part also follows.

\end{proof}

 \medskip

Theorem \ref{nice} directly implies a new approach to Lu's classical theorem (Theorem \ref{nice cor}), requiring constant curvature only on an open set rather than globally. This insight, combined with our study of the Lu constant, provides the key to completing the proof of Theorem \ref{2nd}.

 \medskip

  \begin{proof} [{\bf Proof of Theorem \ref{2nd}}]

From the upper bound of the holomorphic sectional curvature of the Bergman metric and the Ahlfors-Schwarz lemma (see \cite{Ahlfors, Roy, Yau78}), one obtains the first inequality in the following inequality chain
$$
K_{\Omega} \geq L(\Omega) B_\Omega  \geq C_\Omega,
$$
where the second inequality holds true due to the definition of $L(\Omega)$. 
By Theorem \ref{local}, there exists some $\epsilon > 0$ such that: if $z \in \Omega$ and $\delta_\Omega(z) < \epsilon$, then there is a nonempty, open set $E_\Omega(z)$ of directions where the Kobayashi metric $K_{\Omega}$ and the Carath\'eodory metric $C_{\Omega}$ coincide. One gets  on $E_\Omega(z)$ that  
$$
K_{\Omega} = L(\Omega)  B_\Omega = C_\Omega.
$$
By the same argument as in the proof of Theorem \ref{Main} --- namely, using the curvature bounds of the third author \cite{W} together with the fact that the holomorphic sectional curvature function $v \mapsto H(z; v)$ is algebraic (with $z$ fixed) --- we conclude that the holomorphic sectional curvature $H(z; v)$ of the Bergman metric is equal to $-2(L(\Omega))^2$ for all non-zero $v \in T_z(\Omega)$, whenever $\delta_\Omega(z) < \epsilon$.
Then, the holomorphic sectional curvature of the Bergman metric $B^2_\Omega$ is equal to $-2(L(\Omega))^2$ on $\Omega \setminus \Sigma$, where $\Sigma$ is defined by \eqref{sigma}. 
Since the Bergman metric $(\Omega, B^{2}_{\Omega})$ is complete,
Theorem \ref{nice cor} yields that $\Omega$ must be biholomorphic to $\mathbb{B}^n$.

  \end{proof}

    \medskip
      
 \subsection*{Statements and Declarations}

No financial or non-financial interests that are directly or indirectly related to the work submitted for publication were reported by the authors.

Data sharing is not applicable to this article as no datasets were generated or analysed during the current study.

\subsection*{Funding}
The research of the first author was partially supported by an AMS-Simons travel grant and the NSF grant DMS-2103608.

\subsection*{Acknowledgements}

 The authors sincerely thank \L{}ukasz Kosiński, Bernhard Lamel, Damin Wu and Andrew Zimmer for their comments. The first author thanks Song-Ying Li, Ming Xiao, and Hang Xu for stimulating discussions. We are also grateful to the anonymous referees for their careful reading and valuable suggestions, which have improved the exposition of this paper.

\fontsize{11}{9}\selectfont

\vspace{0.5cm}

\noindent xindong.math@outlook.com

\vspace{0.2 cm}

\noindent Department of Mathematics, University of Connecticut, Stamford, CT 06901-2315, USA

\vspace{0.4cm}

\noindent rwang198@ucr.edu, 

\vspace{0.2 cm}

\noindent Department of Mathematics, University of California, Riverside, CA 92521-0429, USA

\vspace{0.4cm}

\noindent wong@math.ucr.edu,

\vspace{0.2 cm}

\noindent Department of Mathematics, University of California, Riverside, CA 92521-0429, USA

\end{document}